\documentclass{amsart}%
\usepackage{amssymb}
\usepackage{amsfonts}
\usepackage{amsmath}
\usepackage{graphicx}%
\setcounter{MaxMatrixCols}{30}
\providecommand{\U}[1]{\protect\rule{.1in}{.1in}}
\newtheorem{theorem}{Theorem}
\theoremstyle{plain}

\newtheorem{corollary}{Corollary}

\newtheorem{proposition}{Proposition}
\newtheorem{remark}{Remark}

\numberwithin{equation}{section}
\begin{document}
\title[Infinite lower triangular matrices]{Applications of infinite lower triangular matrices and their group structure
in combinatorics and the theory of orthogonal polynomials. }
\author{Pawe\l \ J. Szab\l owski}
\address{Department of Mathematics and Information Sciences, \\
Warsaw University of Technology\\
ul Koszykowa 75, 00-662 Warsaw, Poland }
\email{pawel.szablowski@gmail.com}
\thanks{The author is grateful to an anonymous referee for his valuable remarks, which
led to improvements in the paper.}
\date{April 2025}
\subjclass[2020]{Primary 05A19 , 33C45 ; Secondary 20H25}
\keywords{lower triangular matrices, Riordan matrices, Appell, Bell, associate, Pascal
subgroups, Bernoulli and Euler polynomials and numbers, orthogonal
polynomials, rising factorials, $q-$series theory, $q-$Hermite,
Al-Salam--Chihara, Rogers-Szeg\"{o} polynomials.}

\begin{abstract}
Our focus is on the set of lower-triangular, infinite matrices that have
natural operations like addition, multiplication by a number, and matrix
multiplication. With respect to addition this set forms and abelian group
while with respect to matrix multiplication, the invertivle elements of the
set form a group. The set becomes an algebra (non-commutative in fact) with
unity when all three operations are considered together. We indicate important
properties of the algebraic structures obtained in this way. In particular, we
indicate several sub-groups or sub-rings. Among sub-groups, we consider the
group of Riordan matrices and indicate its several sub-groups. We show a
variety of examples (approximately 20) of matrices that are composed of the
sequences of important polynomial or number families as entries of certain
lower-triangular infinite matrices. New, significant relationships between
these families can be discovered by applying well-known matrix operations like
multiplication and inverse calculation to this representation.

The paper intends to compile numerous simple facts about lower-triangular
matrices, specifically the family of Rionian matrices, and briefly review
their properties.

\end{abstract}
\maketitle

\section{Introduction, notation and elementary observations}

Let $\mathbf{A}\allowbreak\overset{df}{=}\allowbreak\left[  a_{n,j}\right]
_{n,j\geq0,}$ with $a_{nj}\allowbreak=\allowbreak0$ for all $j>n\geq0$, be a
lower-triangular infinite matrix with entries belonging to $\mathbb{C}$, in
general. For reasons that will be obvious in the sequel, let us agree that
both index entries will start from $0$.

To avoid convergence issues, let us agree (following \cite{Szabl14}), that
every infinite lower-triangular matrix will be understood as a sequence
$\left\{  \mathbf{A}_{n}\right\}  $ of finite matrices $\mathbf{A}_{n}$
organized in such a way that the matrix $\mathbf{A}_{n}$ is a sub-matrix of
$\mathbf{A}_{n+1}$ and we have in terms of a block structure
\[
\mathbf{A}_{n+1}\allowbreak=\allowbreak%
\begin{bmatrix}
\mathbf{A}_{n} & 0\\
\mathbf{a}_{n}^{T} & \alpha_{n,n}%
\end{bmatrix}
,
\]
where $\mathbf{a}_{n}^{T}$ is certain row vector of dimension $n+1$ and
$\alpha_{n,n}$ certain complex number. Since index $n$ within this paper runs
usually from $0$ and traditionally indices of row and columns within the
matrix run from $1$, we notice that within this paper $\mathbf{A}_{n}$ will
usually denote a $\left(  n+1\right)  \times\left(  n+1\right)  $ matrix.

We will denote such sequences of matrices by $\mathbf{A\allowbreak
=\allowbreak}\left\{  \mathbf{A}_{n}\right\}  $, meaning that we deal with a
lower-triangular infinite matrix $\mathbf{A}$ whose $\left(  n+1\right)
\times\left(  n+1\right)  $ matrices in their upper left corners are matrices
$\mathbf{A}_{n}$, $n\allowbreak=\allowbreak1,\ldots$. This understanding of
infinite lower-triangular matrices helps address some of the reviewer's
concerns about the existence and convergence of matrix operations, since all
operations on lower-triangular matrices of infinite dimensions can be
understood as operations on sequences of finite matrices or vectors. No
convergence considerations are necessary. Notice also that in accordance with
our index convention, we have $\mathbf{A}_{1}\allowbreak=\allowbreak
\alpha_{0,0}\allowbreak\overset{df}{=}\allowbreak\alpha_{0}$. Let us extend
this convention and denote all diagonal elements $a_{n,n}$ by a single index
that is $a_{n}$ if of course $\allowbreak\mathbf{A\allowbreak=\allowbreak
}\left[  a_{k,j}\right]  $. The notation $\mathbf{A\allowbreak=\allowbreak
}\left[  a_{n,j}\right]  _{n,j\geq0,}$ and $\mathbf{A\allowbreak=\allowbreak
}\left\{  \mathbf{A}_{n}\right\}  ,$ where $\mathbf{A}_{n}\allowbreak
=\allowbreak\lbrack a_{nj}]_{n,j=0,\ldots,n}$ shall be used alternatively,

In the sequel, we will use the simplified convention: $\mathbf{A}%
_{n}\allowbreak=\allowbreak\lbrack a_{k,j}]_{n}$. It is elementary to notice
that such matrices equipped with scalar multiplication and matrix addition,
constitute a linear space. Similarly, the set of infinite, lower-triangular
matrices is a non-commutative, associative algebra that uses both matrix
addition and multiplication, as well as scalar multiplication. More precisely,
we have $\alpha\mathbf{A\allowbreak+\allowbreak}\beta\mathbf{B\allowbreak
=\allowbreak}\left[  \alpha a_{n,j}+\beta b_{n,j}\right]  $ if
$\mathbf{A\allowbreak=\allowbreak\lbrack}a_{n,j}]$ and $\mathbf{B}%
\allowbreak=\allowbreak$ $\left[  b_{n,j}\right]  $, $\alpha$, $\beta
\in\mathbb{C}$ and $\mathbf{AB}\allowbreak=\allowbreak\left[  \sum_{k=j}%
^{n}a_{n,k}b_{k,j}\right]  $. Let us notice that the set of diagonal matrices
makes a commutative sub-algebra of our algebra. Let us agree that diagonal
matrices will be denoted in the following way. Namely, $\left[  \left\{
\beta_{n}\right\}  \right]  $ will denote diagonal matrix with $\beta_{n}$ as
its ($n+1)\times(n+1)$ entry. Notice that with respect to matrix addition and
matrix multiplication the set of lower-triangular matrices constitutes a
non-commutative ring. Among these diagonal matrices the ones with all entries
equal to zero serve as the zero element of the ring and the diagonal matrix
$\left[  \left\{  1\right\}  \right]  $ (i.e., with all diagonal elements
equal to $1)$ as the ring's "one", the unity. These special matrices will be
denoted respectively $\mathbf{0}$ and $\mathbf{1}$. Let us denote by
$\mathcal{S}$ the whole algebra of lower-triangular matrices and by
$\mathcal{D}$ the sub-algebra of all diagonal matrices.

Another important sub-algebra of $\mathcal{S}$ comprises of all
lower-triangular with zeros on their diagonal. Moreover, this sub-algebra
considered as a sub-ring is an ideal.

Finally, let us consider lower-triangular matrices $\left[  a_{n,j}\right]  $
with $a_{n,j}\allowbreak=\allowbreak0$ whenever $n-j$ is an odd number. Let us
denote the set of such matrices by $\mathcal{SE}$. To complete introducing the
notation let us agree that matrices of the size $n\times1$, i.e., columns will
also be called vectors and the infinite column (raw) vector $\mathbf{a}$ will
be also understood as sequence $\left\{  \mathbf{a}_{n}\right\}  _{n\geq0}$ of
$(n+1)\times1$ vectors $\mathbf{a}_{n}.$ All .finite and infinite vectors will
be denoted generally by the bold lower case letters, , i.e., $\mathbf{a,}$
$\mathbf{b}$, and so on. The symbol $^{T}$ denotes transposition of a matrix.
For example $\mathbf{a}^{T}$ denotes a row matrix of the size $1\times n.$

We have an elementary observation

\begin{proposition}
$\mathcal{SE}$ is a sub-algebra of $\mathcal{S}$.
\end{proposition}

\begin{proof}
Multiplication by a number retains the property of an entry of being $0$
whenever $n-j$ is odd. Similarly, with the sum of such matrices. Now let us
recall that $\sum_{k=j}^{n}a_{n,k}b_{k,j}$ is a $(n,j)-$th entry of a product
of two matrices $\left[  a_{n,j}\right]  $ and $\left[  b_{n,j}\right]  $.
Observe that $n-j\allowbreak=\allowbreak n-k\allowbreak+\allowbreak k-j$ for
all $j\allowbreak\leq k\allowbreak\leq\allowbreak n$. Now, if $n-j$ is odd
then either $n-k$ or $k-j$ must also be odd. Hence, if $\left[  a_{n,j}%
\right]  $ and $\left[  b_{n,j}\right]  $ both belong to $\mathcal{SE}$ their
product must also belong to $\mathcal{SE}$.
\end{proof}

Examples of this sub-algebra are presented in Subsubsection \ref{Be&Eu}.

As mentioned above, with respect to the addition of matrices, $\mathcal{S}$ is
not only a commutative (i.e., Abelian) group but also a linear space, if one
considers also multiplication by a number. However, with respect to the matrix
multiplication those lower-triangular matrices form a non-commutative monoid (
i.e., magma with associative operation and identity). Notice that the identity
of this monoid it is matrix $\mathbf{1}$ , i.e., the diagonal matrix with $1$
on its diagonal.

Using this observation and the well-known formulae for the block
multiplication of matrices we see that
\begin{equation}
\mathbf{A}_{n+1}\mathbf{B}_{n+1}=%
\begin{bmatrix}
\mathbf{A}_{n} & 0\\
\mathbf{a}_{n}^{T} & \alpha_{n}%
\end{bmatrix}%
\begin{bmatrix}
\mathbf{B}_{n} & 0\\
\mathbf{b}_{n}^{T} & \beta_{n}%
\end{bmatrix}
=%
\begin{bmatrix}
\mathbf{A}_{n}\mathbf{B}_{n} & 0\\
\mathbf{a}_{n}^{T}\mathbf{B}_{n}+\alpha_{n}\mathbf{b}_{n}^{T} & \alpha
_{n}\beta_{n}%
\end{bmatrix}
. \label{mn}%
\end{equation}

It is also elementary to notice that ring $\mathcal{S}$ has divisors of zero.
Hence, it is not a domain. The following two matrices are the examples of the
left and right divisors of zero in this ring with non-zero elements on the
diagonal:%
\[%
\begin{bmatrix}
a_{0} & 0 & 0\\
a_{1} & 0 & 0\\
a_{2} & 0 & 0
\end{bmatrix}
,~~%
\begin{bmatrix}
0 & 0 & 0\\
0 & b_{1} & 0\\
b_{2} & b_{3} & b_{4}%
\end{bmatrix}
.
\]
On the other hand, matrices having a non-zero element on the $(0,0)$ position,
i.e., in the upper left corner cannot be right divisors of zero.

Notice also that diagonal matrices with the same number, say $\alpha$, on the
diagonal, i.e., $\left[  \left\{  \alpha\right\}  \right]  $ can be identified
with this number since from the formula (\ref{mn}) it follows that
\[
\lbrack a_{n,j}][\left\{  \alpha\right\}  ]=[\alpha a_{n,j}].
\]

Now, let us discuss the set of matrices $\mathbf{A}$ which are invertible,
i.e., such matrices for which there exists a matrix (called inverse) and
denoted by $\mathbf{A}^{-1}$ such that
\[
\mathbf{AA}^{-1}\allowbreak=\allowbreak\mathbf{A}^{-1}\mathbf{A\allowbreak
=\allowbreak1.}%
\]
Notice, that if a matrix from $\mathcal{S}$, say $\mathbf{A}$ has
representation $\left\{  \mathbf{A}_{n}\right\}  $ then the diagonal elements
of each of the matrices $\mathbf{A}_{n}$ are their eigenvalues. Consequently,
each matrix $\mathbf{A}_{n}$ having non-zero elements on the diagonal is
invertible. So the set of invertible elements of $\mathbf{\mathcal{S}}$
defined by the conditions: $\mathbf{A\allowbreak=\allowbreak\lbrack}a_{n,j}]$
is invertible iff $\forall j\geq0:$
\[
\text{ }a_{j}\neq0.
\]
Let us denote by $\mathcal{I}$ the set of invertible matrices. It is
elementary to notice that set of all invertible matrices forms a linear cone,
i.e., if $\mathbf{A}$ is in $\mathcal{I}$ then all matrices of a form
$\mathbf{A[}\left\{  \beta\right\}  ]\in\mathcal{I}$ for all $\beta\neq0$. The
family of such matrices forms the so-called skew field (or a division ring)
with the the skew field operations being the ordinary matrix addition
(commutative) and (usually non-commutative) multiplication.

The other properties of elements of $\mathcal{S}$ considered as linear
operators, i.e., their eigenvectors and some decompositions at least for the
special, more precisely Riordan matrices, are presented in a recently
published paper \cite{Cheo22}.

Thus, naturally all elements of $\mathcal{S}$ that have non-zero elements on
their diagonal, form a group if we confine ourselves to multiplication. We
denote this group by $\mathcal{L}$.

As it follows from, Wikipedia (inverted blockwise formula), also e.g. from
\cite{Bern09} or from direct calculation following (\ref{mn}), the formula for
inversion of block matrices applied to the special case when lower-right-most
corner matrix has dimension $1\times1$ yields%
\begin{equation}
\mathbf{A}_{n+1}^{-1}=%
\begin{bmatrix}
\mathbf{A}_{n}^{-1} & 0\\
-\mathbf{a}_{n}^{T}\mathbf{A}_{n}^{-1}/\alpha_{n} & \alpha_{n}^{-1}%
\end{bmatrix}
. \label{odwrt}%
\end{equation}

As a corollary we have the following observation.

\begin{remark}
\label{proste}1. If matrix $\mathbf{A}$ has all integer entries and moreover
it has $1$ on its diagonal, then matrix $\mathbf{A}^{-1}$ has also integer
entries and $1^{\prime}s$ on its diagonal.

2. If matrix $\mathbf{A}$ has polynomial entries except for the diagonal whose
entries are numbers, then its inverse also has polynomial entries.

3. If $\mathbf{A\allowbreak=\allowbreak}\left[  a_{nj}\right]  $ and
$\mathbf{A}^{-1}\allowbreak=\allowbreak\left[  b_{nj}\right]  $, then also the
following two matrices are inverses of one another:

$\qquad$3a. $\forall\lambda\in\mathbb{C}:\mathbf{F(}\lambda)\allowbreak
\mathbf{=\allowbreak}\left[  a_{nj}\lambda^{n-j}\right]  $ and $\mathbf{B(}%
\lambda\mathbf{)}\allowbreak=\allowbreak\left[  b_{nj}\lambda^{n-j}\right]  .$

\qquad3b. Suppose $\left\{  \alpha_{n}\right\}  _{n\geq0}$, $\alpha_{n}%
\neq0:\mathbf{\tilde{A}\allowbreak=\allowbreak}\left[  \alpha_{n}%
a_{n,j}\right]  $ and $\mathbf{\tilde{B}\allowbreak=\allowbreak}\left[
b_{n,j}/\alpha_{j}\right]  .$

4.
\[
\left[  \left\{  \alpha_{n}\right\}  \right]  \left[  a_{n,j}\right]  \left[
\left\{  \beta_{j}\right\}  \right]  =\left[  \alpha_{n}a_{n,j}\beta
_{j}\right]  ,
\]
In particular%
\begin{equation}
\left[  \left\{  \alpha_{n}\right\}  \right]  \left[  a_{n,j}\right]  \left[
\left\{  \alpha_{j}\right\}  \right]  ^{-1}=\left[  \alpha_{n}\tilde{a}%
_{n,j}/\alpha_{j}\right]  , \label{mnoz}%
\end{equation}
where $\tilde{a}_{n,j}$ is defined by the relationship $\left[  a_{n,j}%
\right]  ^{-1}\allowbreak=\allowbreak\left[  \tilde{a}_{n,j}\right]  $.

5. Suppose $\mathbf{A\allowbreak=\allowbreak}\left[  d_{n-j}\right]  $, for
some sequence $\left\{  d_{j}\right\}  $, with $d_{0}\allowbreak=\allowbreak1
$, then $\mathbf{A}^{-1}\allowbreak=\allowbreak\left[  e_{n-j}\right]  $,
where sequence $\left\{  e_{j}\right\}  $ is defined by the recursion:%
\[
e_{0}=1,~~e_{k}=-\sum_{s=0}^{k-1}d_{k-s}e_{s}.
\]

\end{remark}

Further the paper contains Section \ref{PS} devoted to the definition of the
formal power series and presentation of some of its properties, Section
\ref{Rio} devoted to the presentation of the group of Riordan matrices and
some of its subgroups. Finally we have Sections \ref{Ex} containing many
subsections in which examples are grouped with respect to their source. The
last Section \ref{GL} contains glossary of various algebraic terms and symbols
use in the paper.

\section{Formal power series\label{PS}}

In the sequel, we will consider also some sequences and their generating
functions. In general, these sequences as well as variables in the formal
power series being the expansion of these GF can be complex. However, in
almost all cases, we will work with real sequences as well as real GF's. That
is why we will assume that all variables and numbers will be real. We will
also be aware that without any difficulty the results can be extended to the
complex case.

In order to move further, let us extend the usual definition of the so-called
generating function (GF) of a sequence $\left\{  a_{n}\right\}  _{n\geq0}$.
Since we will mostly deal with infinite matrices predominantly
lower-triangular, let us express the notion of GF in terms of matrix
operation. This perspective is not very revolutionary, however, it simplifies,
in the author's opinion, many concepts and ideas. Hence, the ordinary sequence
$\left\{  a_{n}\right\}  _{n\geq0}$ will be viewed either as a column vector
with $a_{n}$ as its $n+1-$st entry or as the diagonal matrix $\left[  \left\{
a_{n}\right\}  \right]  $. The modification of the notion of the GF is as
follows. First, let us fix the so-called "reference sequence" or the so-called
"denominator sequence " $\left\{  c_{n}\right\}  _{n\geq0}$, i.e., some
sequence such that $c_{0}\allowbreak=\allowbreak1$ and $c_{n}\neq0$, for
$n\geq1$. Then, by the generating function (GF) of a sequence $\left\{
a_{n}\right\}  $, given the reference sequence $\left\{  c_{n}\right\}  $, (or
briefly $\left\{  c_{n}\right\}  $ -GF) the formal power series
\[
F_{a}^{c}(x)=\sum_{n\geq0}a_{n}x^{n}/c_{n}.
\]

\begin{remark}
Let us fix the reference sequence $\left\{  c_{n}\right\}  $. Let us consider
two sequences $\left\{  a_{n}\right\}  $ and $\left\{  b_{n}\right\}  $ with
$\left\{  c_{n}\right\}  $ -GF's respectively $F_{a}^{c}(x)$ and $F_{b}%
^{c}(x)$, then $F_{a}^{c}(x)F_{b}^{c}(x)$ is the $\left\{  c_{n}\right\}  $
-GF of the following sequence $\left\{  \sum_{j=0}^{n}%
\genfrac{\langle}{\rangle}{0pt}{}{n}{j}%
_{c}a_{j}b_{n-j}\right\}  $, where we have denoted $%
\genfrac{\langle}{\rangle}{0pt}{}{n}{j}%
_{c}\allowbreak=\allowbreak\frac{c_{n}}{c_{j}c_{n-j}}.$
\end{remark}

We will call $%
\genfrac{\langle}{\rangle}{0pt}{}{n}{j}%
_{c}$ the $\left\{  c_{n}\right\}  $ -binomial coefficient. Let us set also $%
\genfrac{\langle}{\rangle}{0pt}{}{n}{j}%
_{c}\allowbreak=\allowbreak0$ for $j>n$ and $j<0$. We will also call the
following power series $C(x)\allowbreak=\allowbreak\sum_{n\geq0}x^{n}/c_{n}$,
the reference GF, that is briefly RGF.

\begin{remark}
If $c_{n}\allowbreak=\allowbreak1$ , $n\geq0$ and $\left\vert x\right\vert
<1$, the RGF $C(x)\allowbreak=\allowbreak1/(1-x)$, when $c_{n}\allowbreak
=\allowbreak n!$ we get $C(x)\allowbreak=\allowbreak\exp(x)$ while when
$c_{n}\allowbreak=\allowbreak1/\prod_{j=1}^{n}(1-q^{j})$, for some
$q\in(-1,1),$ $n\geq1$ and $\left\vert x\right\vert <1$, then $C(x)\allowbreak
=\allowbreak1/\prod_{j=0}^{\infty}(1-xq^{j})$ as it follows from the so-called
$q-$binomial theorem.
\end{remark}

Notice that this formal power series can be obtained as the result of the
ordinary matrix operations. Namely, we have
\[
F_{a}^{c}(x)=\sum_{n\geq0}a_{n}x^{n}/c_{n}=\mathbf{1}^{T}[\left\{
a_{n}\right\}  ]\left[  \left\{  1/c_{n}\right\}  \right]  \mathbf{x,}%
\]
where we denoted by $\mathbf{1}$ and $\mathbf{x}$ the column vectors with
$n-$th entries respectively $1$ and $x^{n}$. When, $c_{n}\allowbreak
=\allowbreak1$ then we talk about ordinary GF or simply GF of the sequence
$\left\{  a_{n}\right\}  $, when $c_{n}\allowbreak=\allowbreak n!$ then we
talk about "exponential" GF of the sequence $\left\{  a_{n}\right\}  $,
finally when $c_{n}\allowbreak=\allowbreak\prod_{j=1}^{n}(1-q^{j})$, or
$c_{n}\allowbreak=\allowbreak(\prod_{j=1}^{n}(1-q^{j}))/(1-q)^{n}$ for
$n>0,$and some $q\in(-1,1)$, then we talk about $q$ GF of the sequence
$\left\{  a_{n}\right\}  $. Notice, that $\lim_{q\rightarrow1^{-}}(\prod
_{j=1}^{n}(1-q^{j}))/(1-q)^{n}\allowbreak=\allowbreak n!$ . This and many
other facts concerning $q-$series can be found in first chapter of \cite{KLS}.

Let us notice that for every lower-triangular matrix, say $\mathbf{A}%
\allowbreak=\allowbreak\lbrack a_{n,i}]\mathbf{\in}S$ and let us fix the
reference sequence $\left\{  c_{n}\right\}  _{n\geq0}$. We can define now a
formal power series
\[
\mathcal{A}(x,y)=\sum_{i=0}^{\infty}\sum_{n=i}^{\infty}x^{i}y^{n}%
a_{n,i}/\left(  c_{i}c_{n}\right)  =\sum_{i=0}^{\infty}x^{i}g_{i}(y)/c_{i},
\]
where $g_{i}(y)\allowbreak=\allowbreak\sum_{n\geq i}y^{n}a_{n,i}/c_{n}$ is the
GF of the $i-$th column of the matrix $\mathbf{A}$. $\mathcal{A}(x,y)$ will be
called a GF of the matrix $\mathbf{A}$.

The notation used in formal power series theory (FPS) will be utilized in the
sequel. Namely, if $p(x)\allowbreak=\allowbreak\sum_{n\geq0}p_{n}x^{n} $ is a
FPS, then $p_{n}\allowbreak$ is often denoted by: $\left[  \left[
x^{n}\right]  \right]  p(x).$

In particular, we have $\sum_{n\geq0}x^{n}[\left[  x^{n}\right]
](p(x)\allowbreak=\allowbreak p(x).$ We are using a double square bracket here
to denote "coefficient operator" in order to minimize confusion. Recall that
we use a single square bracket to denote infinite matrices.

\section{Riordan arrays\label{Rio}}

One of the most important subgroups of $\mathcal{S}$ is the so-called Riordan
group $\mathcal{R}$ defined in many positions of literature through the
properties of generating functions (GF) of the entries of elements of
$\mathcal{S}$. Let us fix the reference sequence $\left\{  c_{n}\right\}
_{n\geq0}$

The Riordan group is characterized by the fact that the GF of the $i$-th
column has a form $\forall i\geq0:g_{i}(y)\allowbreak=\allowbreak
f(y)h^{i}(y)/c_{i}$, for some formal power series $f(y)\allowbreak=\allowbreak
f_{0}+\sum_{j\geq1}f_{j}y^{j}/c_{j}$ and $h(y)\allowbreak=\allowbreak
\sum_{j\geq1}h_{j}y^{j}/c_{j}$, with $f_{0},h_{1}\neq0$. For a
lower-triangular infinite matrix being an element of the Riordan group, we
thus have%

\[
\mathcal{A}(x,y)=f(y)\sum_{i=0}^{\infty}x^{i}h^{i}(y)/c_{j}=f(y)C(xh(y)),
\]
where $C(x)$ is the RGF. Hence, the Riordan matrix is characterized by the two
generating functions $f$ and $h$. It is traditionally denoted as $\left(
f,h\right)  $. With the Riordan matrix $(f,h)$ we associate the following
lower-triangular matrix $\left[  d_{n,j}\right]  $, where
\[
d_{n,j}\allowbreak=\allowbreak c_{n}[\left[  x^{n}\right]  ]f(x)h^{j}%
(x)/c_{j}.
\]

Indeed, we get the following function as GF of the $j-$th column%
\[
\sum_{n\geq j}d_{n,j}x^{n}/c_{n}\allowbreak=\allowbreak\sum_{n\geq0}x^{n}[
\left[  x^{n}\right]  ]f(x)h^{j}(x)/c_{j}=f(x)h^{j}(x)/c_{j}.
\]

It is well-known that the product of two Riordan matrices is a Riordan matrix.
Hence, the Riordan matrices form a subgroup $\mathcal{R}$ of group
$\mathcal{S}$. Unfortunately, $\mathcal{R}$ is not a ring since in general a
sum of two Riordan matrices is not Riordan. Many papers have been written
about Riordan matrices over the years. They presented many features of this
group. To ensure completeness in the paper, we will revisit some of these
results, particularly those related to the group structure of the Riordan group.

We start with the following well-known, old result.

\begin{theorem}
[Roman]\label{Roman} Suppose we have two Riordan matrices: $(a(x),b(x))$ and
$(c(x),d(x))$ where functions $a,b,c,d$ are such that $a(0)\allowbreak
\neq\allowbreak0\neq\allowbreak c(0)$, $b(0)\allowbreak=\allowbreak
d(0)\allowbreak=\allowbreak0$ and $b^{\prime}(0)\neq\allowbreak0\allowbreak
\neq\allowbreak d^{\prime}(0)$. Then their product is a Riordan matrix
$(a(x)c(b(x)),d(b(x))$ and its inverse the Riordan matrix $(a(x),b(x))$ is
equal to $\left(  1/a(\bar{b}(x),\bar{b}(x)\right)  $, where the function
$\bar{b}$ is defined by the relationship: $\bar{b}(b(x))\allowbreak
=\allowbreak b\left(  \bar{b}\left(  x\right)  \right)  \allowbreak
=\allowbreak x$.
\end{theorem}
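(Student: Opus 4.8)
The plan is to work directly with the generating-function characterization of Riordan matrices, since the group operation on $\mathcal{R}$ was obtained from ordinary matrix multiplication and the matrix product was already written out in \eqref{mn}. First I would recall that the entry $a_{n,i}$ of the product matrix is $\sum_{k=i}^{n}a_{n,k}b_{k,i}$, and translate this into a statement about the column generating functions: if the first matrix is $(a(x),b(x))$ with columns $g_k(y)=a(y)b^k(y)$ and the second is $(c(x),d(x))$ with columns $\tilde g_i(y)=c(y)d^i(y)$, then the $i$-th column of the product has generating function obtained by substituting: column $i$ of the product is $\sum_{k\ge i}(\text{coeff of }x^k\text{ in }c(x)d^i(x))\,a(y)b^k(y)$. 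The key algebraic identity is that this equals $a(y)\,c(b(y))\,(d(b(y)))^i$, which I would verify by the substitution rule for formal power series: plugging $x\mapsto b(y)$ into $c(x)d^i(x)$ is legitimate because $b(0)=0$, so the composition is a well-defined FPS. This immediately yields that the product is the Riordan matrix $(a(x)\,c(b(x)),\,d(b(x)))$, and one checks the admissibility conditions $a(0)c(b(0))=a(0)c(0)\neq 0$ and that $d(b(y))$ has zero constant term and nonzero linear coefficient $d'(0)b'(0)\neq 0$.

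For the inverse, I would use the formula just derived together with the identity element of the group. The identity Riordan matrix is $(1,x)$, since its columns are $1\cdot x^i$, giving the matrix $\mathbf 1$. So to invert $(a(x),b(x))$ I need $(c,d)$ with $(a(x)c(b(x)),d(b(x)))=(1,x)$. The second coordinate forces $d(b(x))=x$, i.e. $d=\bar b$, the compositional inverse of $b$, which exists as a FPS precisely because $b(0)=0$ and $b'(0)\neq 0$; I would invoke this standard fact (the Lagrange inversion setup) rather than reprove it. The first coordinate then forces $a(x)c(b(x))=1$, and substituting $x\mapsto\bar b(x)$ gives $a(\bar b(x))c(x)=1$, hence $c(x)=1/a(\bar b(x))$, which is a well-defined FPS because $a(\bar b(0))=a(0)\neq 0$. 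One should also check the reverse product $(c,d)(a,b)$ equals $(1,x)$, but this follows symmetrically (or from the fact, noted in the excerpt, that a matrix with nonzero $(0,0)$ entry has a genuine two-sided inverse in $\mathcal S$, so a one-sided Riordan inverse is automatically the two-sided inverse).

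The main obstacle, such as it is, is purely bookkeeping: making the column-by-column substitution argument precise, i.e.\ justifying the interchange of the (finite, for each fixed power of $y$) sums and showing that $[[x^k]](c(x)d^i(x))$ assembled against $a(y)b^k(y)$ really reorganizes into $c(b(y))\,d^i(b(y))\,a(y)$. This is the composition rule for formal power series applied coefficientwise, and the only thing that makes it legitimate is $b(0)=0$, which ensures each coefficient of $y$ in the result receives contributions from only finitely many $k$. I would state this as a short lemma about substitution of FPS and then the theorem reduces to two one-line computations. I note in passing that the hypotheses as printed in the statement contain an evident typo (``$b(0)=d(x)=0$'' should read $b(0)=d(0)=0$, and the condition $b(0)\neq 0\neq d(0)$ should be on the derivatives $b'(0)\neq 0\neq d'(0)$); I would use the corrected reading throughout.
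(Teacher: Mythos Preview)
Your proposal is correct and follows the standard argument for this well-known result. Note, however, that the paper does not actually supply a proof of this theorem at all: its entire ``proof'' consists of a citation, namely ``Following \cite{Shap91} the proof is presented in \cite{Rom84}, p.~43.'' Thus your write-up is substantially more detailed than what appears in the paper, and there is nothing in the paper's own text to compare against beyond the bibliographic pointer. Your observation about the typos in the hypotheses (the intended conditions are $b(0)=d(0)=0$ and $b'(0)\neq 0\neq d'(0)$) is also well taken.
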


\begin{proof}
The proof for $c_{n}\allowbreak=\allowbreak1$ is presented in \cite{Rom84},
p.43. The proof for the general reference sequence $\left\{  c_{n}\right\}  $
is presented in \cite{GoodLav15}.
\end{proof}

Below, we present a list of known and newly identified subgroups of the
$\mathcal{R}$ group. The most common and the most important reference sequence
is $\left\{  1\right\}  $. That is why it will be considered in the sequel.
The other cases of the reference sequence will be clearly underlined. One of
the nontrivial examples of usage of the "exponential" reference sequence is
given in Proposition \ref{exp} in Section \ref{Ex}. The facts presented in the
list below are predominantly known and scattered through the literature. We
recall them for completeness of the paper. It should be noted that the
subgroup $\mathcal{IP}(p,\beta,\alpha)$ and several of its subgroups listed in
the initial positions below are believed to be unknown and are being
introduced for the first time.

\bigskip

{\huge List of subgroups ...of some subgroups of the Riordan group} \emph{
\label{subgr}}

\begin{enumerate}
\item Riordan matrices of the form $\left(  p(x),\beta x/\left(  1-\alpha
x\right)  \right)  $, for some reals $\alpha$, $\beta,$ $\beta\neq0$ and a
formal power series $p\left(  x\right)  $ such that $p(0)\neq0$ forms a
subgroup of $\mathcal{R}$. The multiplication rule within this subgroup is the
following:
\begin{align*}
&  \left(  p_{1}(x),\frac{\beta_{1}x}{\left(  1-\alpha_{1}x\right)  }\right)
\left(  p_{2}(x),\frac{\beta_{2}x}{(1-\alpha_{2}x)}\right) \\
&  =\left(  p_{1}(x)p_{2}\left(  \frac{\beta_{1}x}{(1-\alpha_{1}x)}\right)
,\frac{\beta_{1}\beta_{2}x}{\left(  1-(\alpha_{1}+\beta_{1}\alpha
_{2})x\right)  }\right)  .
\end{align*}
We will denote this subgroup by $\mathcal{IP}(p,\beta,\alpha)$.

\item Notice, that Riordan matrices from $\mathcal{IP}(p,1,\alpha)$, i.e.,
Riordan matrices of the form $\left(  p(x),x/(1-\alpha x)\right)  $, for some
real $\alpha$, and a formal power series $p\left(  x\right)  $ such that
$p(0)\neq0$, form another subgroup of $\mathcal{R}$. The multiplication rule
within this subgroup is the following:
\begin{align*}
&  \left(  p_{1}(x),\frac{x}{(1-\alpha_{1}x)}\right)  \left(  p_{2}%
(x),\frac{x}{(1-\alpha_{2}x)}\right) \\
&  =\left(  p_{1}(x)p_{2}\left(  \frac{x}{(1-\alpha_{1}x)}\right)  \right)
,\frac{x}{(1-(\alpha_{1}+\alpha_{2})x)}).
\end{align*}
We will denote this subgroup by $\mathcal{P}(p,\alpha)$ and call matrices of
this form a \textbf{generalized Pascal} matrices (briefly GP). Recall that the
name of a \textbf{Pascal} matrix is attributed traditionally to the matrix
$(1/(1-x),x/(1-x))$, hence a particular element of the group $\mathcal{P}%
(p,\alpha)$.

\item Notice that also matrices from $\mathcal{IP}(p,a,0)$, i.e., Riordan
matrices of the form $\left(  p(x),ax\right)  $, for some real $a$ and a
formal power series $p(x)$ such that $p(0)\neq0$, form another subgroup of
$\mathcal{R}$. The multiplication rule within this subgroup is the following:
\[
(p_{1}(x),a_{1}x)(p_{2}(x),a_{2}x)=\left(  p_{1}(x)p_{2}(a_{1}x\right)
,a_{1}a_{2}x).
\]
We will denote this subgroup by $\mathcal{L(}p,a)$. Obviously, $\mathcal{IP}%
(p,a,0)\allowbreak=\allowbreak\mathcal{L(}p,a)$. It has a subgroup
$\mathcal{L(}p,1)$ which we will denote by $\mathcal{A}$ (called
\textbf{Appell} subgroup) or $\mathcal{A(}p)$, if one wants to underline the
dependence on the formal power series $p$. Moreover, $\mathcal{A}$ has a
subgroup consisting of elements such that $p(0)\allowbreak=\allowbreak1$. Let
us denote it by $\mathcal{O}$. Notice that, if $p(x)\allowbreak=\allowbreak1$
then this particular element, i.e., matrix $(1,x)$ plays the r\^{o}le of $1$
in all these groups and subgroups.

\item Riordan matrices of the form $\left(  1,p(x)\right)  $ make another
subgroup of $\mathcal{R}$ ( called \textbf{associated}, sometimes called also
iterations (see \cite{GoodLav15}) subgroup or simply \textbf{Lagrange}
(denoted by $\mathcal{C}$)) with the following multiplication rule:
\[
(1,p_{1}(x))(1,p_{2}(x))=(1,p_{2}(p_{1}(x))).
\]

\item The \textbf{Bell} subgroup can be formed by Riordan matrices of the form
$(g(x),xg(x))$ or $(f(x)/x,f(x))$, with the following multiplication rule:%
\[
\left(  g_{1}(x),xg_{1}(x)\right)  (g_{2}(x),xg_{2}(x))=(g_{1}(x)g_{2}%
(xg_{1}(x)),xg_{1}(x)g_{2}(xg_{1}(x))).
\]

\item The Riordan matrices of the form $\left(  f^{\prime}\left(  x\right)
,f(x)\right)  $ make also the subgroup (called the \textbf{derivative}
subgroup with the following multiplication rule:%
\[
\left(  f^{\prime}\left(  x\right)  ,f\left(  x\right)  \right)  \left(
g^{\prime}\left(  x\right)  ,g\left(  x\right)  \right)  =\left(  f^{\prime
}\left(  x\right)  g\left(  f^{\prime}\left(  x\right)  \right)  ,g\left(
f\left(  x\right)  \right)  \right)  .
\]

\item The Riordan matrices of the form $\left(  f,g\right)  $, where $f$ is
even while $g$ odd function form another sub-group (called
\textbf{Checkerboard}) with the following multiplication rule:%
\[
\left(  f_{1},g_{1}\right)  \left(  f_{2},g_{2}\right)  =\left(  f_{1}%
f_{2}(g_{1}),g_{2}(g_{1}\left(  x)\right)  \right)  .
\]
This is so, since $f_{1}\left(  -x)\right)  f_{2}(g_{1}\left(  -x\right)
)\allowbreak=\allowbreak f_{1}\left(  x\right)  f_{2}\left(  -g_{1}\left(
x\right)  \right)  \allowbreak=\allowbreak f_{1}\left(  x\right)  f_{2}\left(
g_{1}\left(  x\right)  \right)  $ is even if only $f_{1}$, $f_{2}$ are even
and $g_{1}$ is odd. Similarly, we have $g_{2}\left(  g_{1}\left(  -x\right)
\right)  \allowbreak=\allowbreak g_{2}\left(  -g_{1}\left(  x\right)  \right)
\allowbreak=\allowbreak-g_{2}\left(  g_{1}\left(  x\right)  \right)  $. Hence,
$g_{2}\left(  g_{1}\right)  $ is odd if only $g_{2}$ and $g_{1}$ are odd.
\end{enumerate}

We have the following couple of simple observations concerning subgroups of
$\mathcal{R}$.

\begin{remark}
1. We have:
\[
\mathcal{S\supset R\supset\mathcal{IP}}(p,a,\alpha)\mathcal{\supset
\mathcal{P}}(p,\alpha)\mathcal{\supset A}(p)\mathcal{\supset O}(p),\text{~}%
\mathcal{R\supset C}.
\]
Notice also that in general $\mathcal{O\nsupseteqq D}$ unless elements of
$\mathcal{D}$ are of the form $\left[  \left\{  r^{n}\right\}  \right]  $ for
some $r.$

2. We have immediately
\[
\left(  a\left(  x\right)  ,x\right)  \left(  1,b\left(  x\right)  \right)
=\left(  a\left(  x\right)  ,b\left(  x\right)  \right)  .
\]
That is, every Riordan matrix can be decomposed as a product of an Appell and
an associated matrices.
\end{remark}

\begin{proof}
The proof is elementary, based entirely on the Theorem \ref{Roman}.
\end{proof}

\begin{remark}
In \cite{cand12}, it was shown that Appell subgroup is normal in $\mathcal{R}$.
\end{remark}

\begin{remark}
Notice also, following \cite{cand12}, that every Riordan matrix $(g(x),f(x))$
can be presented as the product of an Appell matrix and a Bell one. Namely, we
have
\[
(xg(x)/f(x),x)(f(x)/x,f(x))=(g(x),f(x)).
\]

\end{remark}

\begin{remark}
\label{inverse}Since the matrix $\left(  1,x\right)  $ plays the r\^{o}le of a
neutral element in the Riordan group we see that the inverse of $\left(
p(x),x/(1-\alpha x)\right)  ^{-1}$ is equal to $\left(  p_{1}(x),x/(1+\alpha
x)\right)  $ where $p_{1}(x)$ is such that $p_{1}(x/(1-\alpha x))\allowbreak
=\allowbreak1/p(x).$
\end{remark}

\begin{remark}
All subgroups mentioned on the List of subgroups ...\ref{subgr} except for the
generalized Pascal subgroup are known and mentioned, e.g., in \cite{cand12} or
even earlier, like, e.g., the associated subgroup is mentioned in
\cite{Shap91}.
\end{remark}

Hence, let us analyze subgroup $\mathcal{P}(p,\alpha)$ in more detail.

\begin{proposition}
\label{Pascal}Let us consider a generalized Pascal matrix $\left(
p(x),x/(1-\alpha x)\right)  $. Let us take $\alpha\in\mathbb{R}$ and that
$p\left(  x\right)  $ has the following expansion in the FPS $p(x)\allowbreak
=\allowbreak\sum_{n\geq0}p_{n}x^{n}$ with $p_{0}\neq0$. We have then

1.
\[
\left[  \left[  x^{n}\right]  \right]  p(x)\left(  x/(1-\alpha x)\right)
^{k}=\sum_{j=0}^{n-k}p_{j}\alpha^{n-k-j}\binom{n-j-1}{k-1},
\]

2.
\[
p(x/(1-\alpha x))=p_{0}+\sum_{s=1}^{\infty}x^{s}\sum_{j=0}^{s-1}p_{j+1}%
\alpha^{s-1-j}\binom{s-1}{j}.
\]

\end{proposition}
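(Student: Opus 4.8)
The plan is to reduce both parts to a single coefficient-extraction identity for the powers of $x/(1-\alpha x)$, everything taking place in the ring of formal power series so that no convergence issue arises. First I would record the expansion coming from the generalized binomial theorem: for every $k\geq 1$,
\begin{equation*}
\left(\frac{x}{1-\alpha x}\right)^{k}=x^{k}(1-\alpha x)^{-k}=\sum_{m\geq 0}\binom{k+m-1}{m}\alpha^{m}x^{k+m},
\end{equation*}
so that, after the substitution $r=k+m$ together with the symmetry $\binom{k+m-1}{m}=\binom{k+m-1}{k-1}$,
\begin{equation*}
\left[\left[x^{r}\right]\right]\left(\frac{x}{1-\alpha x}\right)^{k}=\alpha^{r-k}\binom{r-1}{k-1}\qquad (r\geq k\geq 1),
\end{equation*}
while this coefficient is $0$ when $r<k$.

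For part 1 I would then multiply by $p(x)=\sum_{j\geq 0}p_{j}x^{j}$ and form the Cauchy product: the coefficient of $x^{n}$ in $p(x)(x/(1-\alpha x))^{k}$ equals $\sum_{j}p_{j}\left[\left[x^{n-j}\right]\right](x/(1-\alpha x))^{k}$. Inserting the value of the inner coefficient from the first paragraph, and noting that the summand vanishes unless $n-j\geq k$, i.e. $j\leq n-k$, yields precisely $\sum_{j=0}^{n-k}p_{j}\alpha^{n-k-j}\binom{n-j-1}{k-1}$. For part 2 I would write $p(x/(1-\alpha x))=\sum_{n\geq 0}p_{n}(x/(1-\alpha x))^{n}$, peel off the $n=0$ term (which contributes only $p_{0}$, to the constant coefficient), and for $s\geq 1$ extract the coefficient of $x^{s}$ using the same identity to get $\left[\left[x^{s}\right]\right]p(x/(1-\alpha x))=\sum_{n=1}^{s}p_{n}\alpha^{s-n}\binom{s-1}{n-1}$; re-indexing by $j=n-1$ converts this into $\sum_{j=0}^{s-1}p_{j+1}\alpha^{s-1-j}\binom{s-1}{j}$, the claimed expansion. (As a shortcut I would remark that part 2 is the case of part 1 in which $p$ is replaced by the constant series $1$, then summed against the $p_{n}$.)

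There is no genuine obstacle here: the proposition is a bookkeeping computation, and the only points requiring care are the re-indexing of the binomial coefficient, the boundary conditions ($k\geq 1$ and the vanishing of the coefficient for $r<k$), and the separate handling of the $n=0$ term in part 2.
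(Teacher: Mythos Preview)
Your proof is correct and follows the same essential strategy as the paper---expand $(x/(1-\alpha x))^{k}$ via the binomial theorem and then Cauchy-multiply by $p(x)$---but your execution is more direct. The paper chooses to write
\[
\left(\frac{x}{1-\alpha x}\right)^{k}=\frac{1}{\alpha^{k}}(1-\alpha x)\sum_{n\geq k}\binom{n}{k}(\alpha x)^{n},
\]
which carries an extra factor $(1-\alpha x)$ through the Cauchy product; extracting $[[x^{n}]]$ then produces two sums (one shifted by the $-\alpha x$ term) that must be recombined via the Pascal identity $\binom{n-j}{k}-\binom{n-1-j}{k}=\binom{n-j-1}{k-1}$. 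By starting instead from the negative-binomial expansion $(1-\alpha x)^{-k}=\sum_{m\geq 0}\binom{k+m-1}{k-1}\alpha^{m}x^{m}$, you land on $\binom{r-1}{k-1}$ immediately and avoid that recombination step in both parts. The paper's route has the minor advantage of not invoking the negative-binomial series (only the identity $\sum_{n\geq j}\binom{n}{j}x^{n}=x^{j}/(1-x)^{j+1}$), but yours is shorter and makes the reduction of part~2 to part~1 transparent.
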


\begin{proof}
1. Let us find $\left[  \left[  x^{n}\right]  \right]  p(x)\left(  x/(1-\alpha
x)\right)  ^{k}$. To do this, let us expand $\left(  x/(1-\alpha x)\right)
^{k} $ as the power series. It is easy, noticing that for $\alpha
\allowbreak=\allowbreak0$ the series is finite and consists of only one
element, while for $\alpha\neq0$ we have%
\[
\left(  x/(1-\alpha x)\right)  ^{k}=\frac{1}{\alpha^{k}}\left(  \frac{\alpha
x}{1-\alpha x}\right)  ^{k}\allowbreak=\allowbreak\frac{1}{\alpha^{k}%
}(1-\alpha x)\sum_{n\geq k}\binom{n}{k}\left(  \alpha x\right)  ^{n},
\]
by the well-known binomial theorem. Now applying Cauchy rule of multiplication
of series, we get%
\begin{gather*}
p(x)\left(  x/(1-\alpha x)\right)  ^{k}=p\left(  x\right)  \left(  \frac
{1}{\alpha^{k}}(1-\alpha x)\sum_{n\geq0}\binom{n}{k}\left(  \alpha x\right)
^{n}\right) \\
=(1-\alpha x)x^{k}\sum_{s=0}^{\infty}x^{s}\sum_{j=0}^{s}p_{j}\alpha
^{s-j}\binom{s-j+k}{k}.
\end{gather*}

Hence, further we get, after some simple algebra:%
\begin{gather*}
\left[  \left[  x^{n}\right]  \right]  p(x)\left(  x/(1-\alpha x)\right)
^{k}=\sum_{j=0}^{n-k}p_{j}\alpha^{n-k-j}\binom{n-k-j+k}{k}\\
-\alpha\sum_{j=0}^{n-k-1}p_{j}\alpha^{n-k-1-j}\binom{n-k-1-j+k}{k}\\
=\sum_{j=0}^{n-k}p_{j}\alpha^{n-k-j}\binom{n-j-1}{k-1}%
\end{gather*}

2. Let us analyze also what is the formal power series (FPS) of $p(x/(1-\alpha
x))$ provided $FPS$ of $p(x)$ is given by: $\sum_{j\geq0}p_{j}x^{j}.$

We have, after some elementary algebra:%
\begin{gather*}
\sum_{j=0}^{\infty}p_{j}\alpha^{-j}\left(  \frac{\alpha x}{1-\alpha x}\right)
^{j}==(1-\alpha x)\sum_{j=0}^{\infty}p_{j}x^{j}\sum_{k=0}^{\infty}x^{k}%
\alpha^{k}\binom{j+k}{k}\\
=(1-\alpha x)\sum_{s=0}^{\infty}x^{s}\sum_{j=0}^{s}p_{j}\alpha^{s-j}\binom
{s}{s-j}\\
=p_{0}+\sum_{s=1}^{\infty}x^{s}\sum_{j=0}^{s}p_{j}\alpha^{s-j}\binom{s}%
{s-j}-\alpha\sum_{s=0}^{\infty}x^{s+1}\sum_{j=0}^{s}p_{j}\alpha^{s-j}\binom
{s}{s-j}.
\end{gather*}
Further, we get also after some simple algebra:%
\begin{gather*}
p(x/1-\alpha x))=p_{0}+\sum_{s=1}^{\infty}x^{s}\sum_{j=0}^{s}p_{j}\alpha
^{s-j}\binom{s}{j}-\sum_{m=1}^{\infty}x^{m}\sum_{j=0}^{m-1}p_{j}\alpha
^{m-j}\binom{m-1}{j}\\
=p_{0}+\sum_{s=1}^{\infty}x^{s}\sum_{j=0}^{s-1}p_{j+1}\alpha^{s-1-j}%
\binom{s-1}{j}.
\end{gather*}

\end{proof}

We have also the following immediate observations:

\begin{remark}
\label{simple}1. If we set $\alpha\allowbreak=\allowbreak0$ in assertion 1. of
the Proposition \ref{Pascal}, then obviously $\left[  \left[  x^{n}\right]
\right]  p(x)x^{k}\allowbreak=\allowbreak p_{n-k}$. Further, if we set
$\alpha\allowbreak=\allowbreak1\allowbreak=\allowbreak p_{j}$ for all $j\geq0$
then $\left[  \left[  x^{n}\right]  \right]  p(x)\left(  x/(1-x)\right)
^{k}\allowbreak=\allowbreak\sum_{j=0}^{n-k}\binom{n-j-1}{k-1}\allowbreak
=\allowbreak\binom{n}{k}$, i.e., we deal with the so-called Pascal matrix.
That's why we call the Riordan matrix $\left(  p(x),x/(1-\alpha x)\right)  $ a
generalized Pascal matrix.

2. Given an infinite sequence $\left\{  p_{j}\right\}  _{j\geq0}$ the
following infinite sequence \newline$\left\{  \sum_{j=0}^{n}\binom{n}{j}%
p_{j}\alpha^{n-j}\right\}  _{n\geq0}$ is called the sequence of generalized
binomial transforms (GBT($\alpha$)) of the sequence $\left\{  p_{j}\right\}
_{j\geq0}$. In the case of $\alpha\allowbreak=\allowbreak1$, simply the
binomial transform. It is well known that GBT($\alpha$) and GBT($-\alpha$) are
mutually inverse, that is GBT($-\alpha$) applied to GBT($\alpha$) of a
sequence $\left\{  p_{j}\right\}  $ recovers this sequence. Besides GF of the
sequence $\left\{  \hat{p}_{n}\overset{def}{=}\sum_{j=0}^{n}\binom{n}{j}%
p_{j}\alpha^{n-j}\right\}  _{n\geq0}$ is equal to
\[
\frac{1}{1-\alpha x}p(\frac{x}{1-\alpha x}),
\]
hence
\[
p(\frac{x}{1-\alpha x})=(1-\alpha x)\sum_{j\geq0}x^{j}\hat{p}_{j},
\]
and consequently
\[
\lbrack\lbrack x^{n}]]p(\frac{x}{1-\alpha x})=\hat{p}_{n}-\alpha\hat{p}%
_{n-1}.
\]

3. Observe that
\begin{align*}
\left[  \left[  x^{n}\right]  \right]  p(x)\left(  x/(1-\alpha x)\right)
^{k}  &  =\sum_{j=0}^{n-k}p_{j}\alpha^{n-k-j}\binom{n-j-1}{k-1}\\
&  =\binom{n-1}{k-1}\sum_{j=0}^{n-k}\binom{n-k}{j}p_{j}\alpha^{n-k-j}%
/\binom{n-1}{j}%
\end{align*}

\end{remark}

As a corollary, we have the following observations:

\begin{corollary}
\label{Appell}1. Suppose we have two Riordan matrices $(a(x),b(x))$ and
$(d(x),x)$. Then, we have
\[
(a(x),b(x))(d(x),x)\allowbreak=\allowbreak\left(  a(x)d((b(x)),b(x)\right)
\]
and
\[
\left(  d(x),x)\right)  (a(x),b(x))\allowbreak=\allowbreak\left(
d(x)a(x),b(x)\right)  .
\]
In particular, we have the following form of multiplication by the Pascal
matrix:%
\begin{align}
\left(  \frac{1}{1-\alpha x},\frac{x}{1-\alpha x}\right)  (d(x),x) &  =\left(
D(x,\alpha),\frac{x}{1-\alpha x}\right)  ,\label{Mn1}\\
(d(x),x)\left(  \frac{1}{1-\alpha x},\frac{x}{1-x}\right)   &  =\left(
\frac{1}{1-\alpha x}d(x),\frac{x}{1-x}\right)  .\label{Mn2}%
\end{align}
where we denoted $d_{n}\allowbreak=\allowbreak\lbrack\lbrack x^{n}]]d(x),~$
$\hat{d}_{n}\left(  \alpha\right)  \allowbreak=\allowbreak\sum_{j=0}^{n}%
\binom{n}{j}\alpha^{n-j}d_{j}$, \newline$D(x,\alpha)\allowbreak=\allowbreak
\sum_{j=0}^{\infty}\hat{d}_{j}\left(  \alpha\right)  x^{j}.$

2. For $\alpha\in\mathbb{R}$:
\[
\left(  \frac{1}{1-\alpha x},\frac{x}{1-\alpha x}\right)  \left(
d(x),\frac{x}{1+\alpha x}\right)  =\left(  D(x,\alpha),x\right)  ,
\]
where $D(x)$ is defined as above.
\end{corollary}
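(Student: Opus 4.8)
The plan is to obtain every assertion as an immediate specialization of Theorem~\ref{Roman}, using the generating-function identity for the generalized binomial transform recorded in Remark~\ref{simple}(2) to rewrite the first components. Throughout I would write Roman's product rule in the form $(a_1,b_1)(a_2,b_2)=\bigl(a_1\cdot(a_2\circ b_1),\,b_2\circ b_1\bigr)$, being careful not to confuse the Riordan-slot letter $d$ appearing in $(d(x),x)$ with the ``$d$''-slot in the statement of Theorem~\ref{Roman}.

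For Part~1, I would apply this rule in the two cases in which one factor has the form $(d(x),x)$. Taking the second factor to be $(d(x),x)$, composition with $x$ is trivial on the second slot, which gives $(a(x),b(x))(d(x),x)=\bigl(a(x)d(b(x)),b(x)\bigr)$; taking the first factor to be $(d(x),x)$, the substitution $b_1(x)=x$ collapses $a_2\circ b_1$ to $a_2$ and leaves the second slot unchanged, giving $(d(x),x)(a(x),b(x))=\bigl(d(x)a(x),b(x)\bigr)$. The two ``in particular'' formulas then follow by inserting the Pascal generating functions $a(x)=1/(1-\alpha x)$ and $b(x)=x/(1-\alpha x)$ (resp. $b(x)=x/(1-x)$): the second component is read off directly, and for the first component of \eqref{Mn1} one is left with $\tfrac{1}{1-\alpha x}\,d\!\bigl(\tfrac{x}{1-\alpha x}\bigr)$, which by Remark~\ref{simple}(2) is exactly the generating function $D(x,\alpha)=\sum_{j}\hat d_j(\alpha)x^{j}$ of the sequence of $\mathrm{GBT}(\alpha)$'s of $\{d_j\}$. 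Formula \eqref{Mn2} needs no transform at all, since it is the bare second rule.

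For Part~2, I would again use Roman's rule, now with $a_1(x)=1/(1-\alpha x)$, $b_1(x)=x/(1-\alpha x)$, $a_2(x)=d(x)$ and $b_2(x)=x/(1+\alpha x)$. The only point requiring verification is that the second component collapses: $b_2(b_1(x))=\dfrac{x/(1-\alpha x)}{1+\alpha x/(1-\alpha x)}=x$, i.e. that the fractional-linear maps $x\mapsto x/(1-\alpha x)$ and $x\mapsto x/(1+\alpha x)$ are mutually inverse — which is also what Remark~\ref{inverse} records. Granting this, the first component is $a_1(x)\cdot a_2(b_1(x))=\tfrac{1}{1-\alpha x}\,d\!\bigl(\tfrac{x}{1-\alpha x}\bigr)=D(x,\alpha)$ by Remark~\ref{simple}(2) again, completing the argument.

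There is no genuinely hard step here: the whole content is a substitution into Theorem~\ref{Roman} followed by one invocation of the transform identity. The only thing to watch — the ``main obstacle'', such as it is — is the bookkeeping: keeping straight which series gets substituted into which in Roman's composition rule, and the one-line check that $x/(1+\alpha x)$ and $x/(1-\alpha x)$ compose to the identity.
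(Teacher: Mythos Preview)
Your proposal is correct and follows essentially the same route as the paper: both reduce everything to Theorem~\ref{Roman} and then identify the first slot $\tfrac{1}{1-\alpha x}\,d\!\bigl(\tfrac{x}{1-\alpha x}\bigr)$ with the generating function $D(x,\alpha)$ of the generalized binomial transform. The only difference is cosmetic: you invoke Remark~\ref{simple}(2) for that identification, whereas the paper re-derives it inline (for $\alpha=1$) via the expansion $\sum_{n\ge 0}x^n\binom{n}{j}=x^j/(1-x)^{j+1}$; your version is slightly cleaner and also spells out Part~2, which the paper leaves implicit.
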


\begin{proof}
The only thing that requires justification is (\ref{Mn1}). Following Theorem
of Roman, we have%
\[
\left(  \frac{1}{1-x},\frac{x}{1-x}\right)  (d(x),x)=\left(  \frac{1}%
{1-x}d(\frac{x}{1-x}),\frac{x}{1-x}\right)  .
\]
Now, let us recall that
\[
\sum_{n=0}^{\infty}x^{n}\binom{n}{j}\allowbreak=\allowbreak\frac{x^{j}%
}{(1-x)^{j+1}},
\]
consequently we have%
\begin{align*}
d(\frac{x}{1-x})  &  =\sum_{j=0}^{\infty}d_{j}\left(  \frac{x}{1-x}\right)
^{j}=\sum_{j=0}^{\infty}d_{j}(1-x)\sum_{n=j}^{\infty}x^{n}\binom{n}{j}\\
&  =(1-x)\sum_{n=0}^{\infty}x^{n}\sum_{j=0}^{n}d_{j}\binom{n}{j}.
\end{align*}
Now it is elementary to notice that $\frac{1}{1-x}d(\frac{x}{1-x}%
)\allowbreak=\allowbreak\sum_{n=0}^{\infty}x^{n}\hat{d}_{n}\overset{def}{=}%
D(x)$.
\end{proof}

\begin{remark}
As a by-product of assertion 2. of the Lemma \ref{Appell} we see that every
Appell matrix can be decomposed as the product of two generalized Pascal
matrices. Namely, we have for all $\alpha.$%
\begin{align*}
\left(  d(x),\frac{x}{1-\alpha x}\right)  \left(  1,\frac{x}{1+\alpha
x}\right)   &  =\left(  d(x),x\right)  ,\\
\left(  d(x),\frac{x}{1-\alpha x}\right)   &  =\left(  d(x),x\right)  \left(
1,\frac{x}{1-\alpha x}\right)  .
\end{align*}

\end{remark}

\begin{remark}
Since we have
\[
\left(  1,\frac{x}{1-\alpha x}\right)  \left(  1,\frac{x}{1-bx}\right)
=\left(  1,\frac{x}{1-\left(  \alpha+b\right)  x}\right)  ,
\]
as it follows from assertion 2. of the List of subgroups ...\ref{subgr}, we
see that there exists a homomorphism between the group of Riordan matrices
$\left\{  \left(  1,\frac{x}{1-\alpha x}\right)  \right\}  _{a\in\mathbb{C}}$
with matrix multiplication as group operation and the group of complex numbers
with addition as group operation.

Besides notice that
\[
\left[  a^{n-j}\binom{n-1}{j-1}\right]  =\left(  1,\frac{x}{1-\alpha
x}\right)  ,
\]
as it follows from assertion 1. of Proposition \ref{Pascal} with $p_{0}=1$ and
$p_{j}=0$ for $j>0$.
\end{remark}

\begin{remark}
\label{diagon}To complete presentation of simple subgroups of the Riordan
group let us notice that $[\left\{  a^{n}\right\}  ]\allowbreak=\allowbreak
\left(  1,ax\right)  $. Consequently we observe that $\left(  1,\gamma
x\right)  \left(  a(x\right)  ,b(x))\allowbreak=\allowbreak\left(  a\left(
\gamma x\right)  ,b\left(  \gamma x\right)  \right)  ,$ and $\left(
a(x\right)  ,b(x))\left(  1,\gamma x\right)  \allowbreak=\allowbreak\left(
a\left(  x\right)  ,\gamma b\left(  x\right)  \right)  .$
\end{remark}

Why do we study the lower-triangular representation of some sequences? Well,
to get, for example, some nontrivial identities.

To see what we mean let us consider several examples. Most of them would
concern Riordan matrices of the form $\left(  d(x),x\right)  $, i.e., Appell
matrices, since these matrices have a form easy to deal with. Besides, there
exist in the literature many examples of matrices of this form whose entries
are polynomials. Moreover, there are also ready-to-use formulas for the
inverses of such matrices. In the sequel, we often use matrix operation as
defined by (\ref{mnoz}). We we will denote the result of such operation on the
matrix, say $\mathbf{A}$ as $\left[  \left\{  \alpha_{n}\right\}  \right]  $
transform of $\mathbf{A.}$

\section{Examples\label{Ex}}

This section focuses on analyzing well-known identities and sequences of
numbers or polynomials using lower-triangular matrices, with most of them
being of Riordan type. One has to underline that this is not the only approach
to this problem. One can use simply the GF techniques like it was done in
\cite{Barb14} or linear difference equations as it was presented in
\cite{EJ16} and \cite{EJ19}. Most of the examples will be Riordan matrices of
the form $\left(  d(x,y\right)  ,y)$, with $d(x,y)$ being a GF of the sequence
$\left\{  d_{n}(x)\right\}  _{n\geq0}$ consisting either of numbers or
polynomials in $x$. By assertion 1. of the Remark \ref{simple} we see that
$\left[  d_{n-j}(x)\right]  $ is the Riordan matrix $\left(  d(x,y),y\right)
.$ Two examples will refer to the concept of $\mathcal{IP}(p,\beta,\alpha)$
class of Riordan matrices introduced at position (1) of the List of subgroups
...\ref{subgr}. The sub-algebra $\mathcal{SE}$ introduced in the paper's first
section is demonstrated in the subsubsection \ref{Be&Eu}..

\subsection{Bernoulli and Euler polynomials and numbers}

\subsubsection{Bernoulli \& Euler Polynomials}

Let us recall that Bernoulli and Euler polynomials (denoted respectively
$B_{n}(x)$ and $E_{n}(x)$, for $n-$th polynomial) are defined respectively by
the expansions:
\begin{subequations}
\begin{align}
t\exp(xt)/(\exp(t)-1)  &  =\sum_{n\geq0}\frac{t^{n}}{n!}B_{n}(x),\label{Bn}\\
2\exp(xt)/(\exp(t)+1)  &  =\sum_{n\geq0}\frac{t^{n}}{n!}E_{n}(x). \label{En}%
\end{align}
Hence, we have the following identity
\end{subequations}
\begin{gather*}
\sum_{n\geq0}\frac{2^{n}t^{n}}{n!}B_{n}(x)=2t\exp(2xt)/(\exp(2t)-1)=\left(
\sum_{n\geq0}\frac{t^{n}}{n!}B_{n}(x)\right)  \left(  \sum_{n\geq0}\frac
{t^{n}}{n!}E_{n}(x)\right) \\
=\sum_{k\geq0}\frac{t^{k}}{k!}\sum_{n=0}^{k}\binom{k}{n}B_{n}(x)E_{k-n}(x).
\end{gather*}
Comparing polynomials by $t^{n}$ and dividing both sides by $k!$ we end up
with the identity.%
\begin{equation}
2^{k}B_{k}(x)/k!=\sum_{n=0}^{k}\left(  B_{n}(x)/n!\right)  \left(
E_{k-n}(x)/(k-n)!\right)  . \label{B=BE}%
\end{equation}
Let us define the following lower-triangular infinite matrices $\left[
B_{n-j}(x)/(n-j)!\right]  ~$and $\left[  E_{n-j}(x)/(n-j)!\right]  $. Hence,
the equation (\ref{B=BE}) is now reflected in the following relationship
between these matrices:%
\begin{equation}
\left[  B_{n-j}(x)/(n-j)!\right]  \left[  E_{n-j}(x)/(n-j)!\right]  =\left[
\left\{  2^{n}\right\}  \right]  \left[  B_{n-j}(x)/(n-j)!\right]  \left[
\left\{  2^{-j}\right\}  \right]  . \label{B&E}%
\end{equation}

\begin{remark}
Notice that, following definition of Appell matrix, we deduce that $\left[
B_{n-j}(z)/(n-j)!\right]  $ and $\left[  E_{n-j}(z)/(n-j)!\right]  $ are two
Riordan matrix of an Appell type. More precisely, for every $z\in\mathbb{R}:$
\begin{align*}
\left[  B_{n-j}(z)/(n-j)!\right]  \allowbreak &  =\allowbreak(\frac{x\exp
(zx)}{\exp(x)-1},x),\\
\left[  E_{n-j}(z)/(n-j)!\right]   &  =(\frac{2\exp(zx)}{\exp(x)+1},x)
\end{align*}
. Moreover, (\ref{B&E}) is just the example of the position (3). of the List
of subgroups ...\ref{subgr}, above. Namely, we have
\begin{align*}
&  \left[  B_{n-j}(z)/(n-j)!\right]  \left[  E_{n-j}(z)/(n-j)!\right]  =\\
&  =(\frac{x\exp(zx)}{\exp(x)-1},x)(\frac{2\exp(zx)}{\exp(x)+1},x)=\left(
\frac{2\exp(2zx)}{\exp(2x)-1},x\right)  ,
\end{align*}
where $\frac{2\exp(2zx)}{\exp(2x)-1}\allowbreak=\allowbreak\sum_{n>j}%
x^{n-j}2^{n-j}B_{n-j}(z)/(n-j)!$.
\end{remark}

\subsubsection{Laguerre polynomials}

Let $L_{n}^{\left(  \alpha\right)  }(x)$ denote $n-$th Laguerre polynomial,
the member of the family of polynomials which are orthogonal with respect to
measure $x^{-\alpha}\exp(-x)$ for $x>0$ and $\alpha\neq1,2,\ldots$ . Let us
define the following family of orthogonal polynomials:%
\[
\mathcal{L}_{n}^{\left(  \alpha\right)  }(x)=L_{n}^{\left(  -\alpha-2\right)
}\left(  -x)\right)  .
\]

\begin{proposition}
For all $\alpha\neq1,2,\ldots$ ,$n\geq m\geq0:\sum_{j=m}^{n}L_{n-j}^{\left(
\alpha\right)  }(x)\mathcal{L}_{j-m}^{\left(  \alpha\right)  }(x)=0$,
Consequently, we have
\[
\left[  L_{n-j}^{\left(  \alpha\right)  }(x)\right]  ^{-1}=\left[
\mathcal{L}_{n-j}^{\left(  \alpha\right)  }(x)\right]  .
\]
Besides, we know that we deal with the Riordan matrix of the Appell type
\[
\left[  L_{n-j}^{(a)}(x)\right]  =\left(  \exp\left(  -\frac{tx}{1-t}\right)
/\left(  1-t\right)  ^{a+1},1\right)  ,
\]
since $\sum_{j\geq0}t^{n}L_{n}(x)\allowbreak=\allowbreak\exp\left(  -\frac
{tx}{1-t}\right)  /\left(  1-t\right)  .$
\end{proposition}

\begin{proof}
First, notice that the we can take $k\allowbreak=\allowbreak j-m$ and prove
the identity $\sum_{k=0}^{n-m}L_{n-m-k}^{\left(  a\right)  }(x)\mathcal{L}%
_{k}^{\left(  a\right)  }(x)\allowbreak=\allowbreak0$. We are now using the
concept of generating functions and asserting Remark \ref{inverse}.
Consequently, we realize that our identity is equivalent to the fact that the
generating function of the polynomials $\left\{  \mathcal{L}_{k}^{\left(
\alpha\right)  }(x)\right\}  $ is equal to the inverse of the generating
function of the Laguerre polynomials. But it is well-known that
\[
\sum_{n\geq0}t^{n}L_{n}^{\left(  \alpha\right)  }(x)=\frac{1}{\left(
1-t\right)  ^{\alpha+1}}\exp(-\frac{tx}{1-t}).
\]
Hence, it remains to notice that :
\[
\sum_{n\geq0}t^{n}\mathcal{L}_{n}^{\left(  \alpha\right)  }(x)=(1-t)^{\alpha
+1}\exp(\frac{tx}{1-t}).
\]

\end{proof}

\subsubsection{Hermite polynomials}

Following an identity presented in, say \cite{Szab2020}(unnumbered formula
below (8.17)), we have%
\[
\left[  He_{n-j}(x)/(n-j)!\right]  ^{-1}=\left[  i^{n-j}He_{n-j}%
(ix)/(n-j)!\right]  ,
\]
where $i$ is an imaginary unit, and $He_{n}(x)$ $n-$th so-called probabilistic
Hermite polynomial, i.e., the ones orthogonal with respect to the measure with
the density $\exp(-x^{2}/2)/\sqrt{2\pi}$.

Recall also that the GF function of the Hermite polynomials is $\exp
(yx-y^{2}/2)$ that is we have (see e.g. Wikipedia, \cite{KLS})%
\[
\sum_{n\geq0}\frac{y^{n}}{n!}He_{n}(x)=\exp(yx-y^{2}/2).
\]

\begin{remark}
Now we see that $\left[  He_{n-j}(x)/(n-j)!\right]  $ is an Riordan matrix of
the Appell type given by the formula $\left(  \exp(yx-y^{2}/2),x\right)  $.
\end{remark}

\subsubsection{Numbers\label{Be&Eu}}

We will also use the following notation:
\begin{align*}
\varepsilon(n)  &  =\left\{
\begin{array}
[c]{ccc}%
0 & \text{if } & n\text{ is odd}\\
1 & \text{if} & n\text{ is even}%
\end{array}
\right.  ,~H(x)=\left\{
\begin{array}
[c]{ccc}%
0 & \text{if} & x<0\\
1 & \text{if} & x\geq0
\end{array}
\right.  ,\\
H2(x)  &  =H(x)+H(x-2)-2H(x-1)=\left\{
\begin{array}
[c]{ccc}%
1 & \text{if} & x=0\\
-1 & \text{if} & x=1\\
0 & \text{if} & \text{otherwise}%
\end{array}
\right.  ,\\
H3(x)  &  =H(x)-H(x-1)-H(x-2)+H(x-3).
\end{align*}
and $\left\{  E_{n}\right\}  $, and $\left\{  B_{n}\right\}  $ denote
respectively $n-$th Euler and Bernoulli numbers.

We have
\begin{align*}
\left[  \binom{n}{j}\right]  ^{-1} &  =\left[  (-1)^{n-j}\binom{n}{j}\right]
,~\left[  \lambda^{n-j}\binom{n}{j}\right]  ^{-1}=\left[  (-\lambda
)^{n-j}\binom{j}{n}\right]  ,\\
\left[  H(n-j)\right]  ^{-1} &  =\left[  H2(n-j)\right]  ,~\left[
\varepsilon(n-j)H(n-j)\right]  ^{-1}=\left[  H3(n-j)\right]  ,
\end{align*}
for some real $\lambda$. Let us recall that matrix $\left[  \binom{n}%
{j}\right]  $ it is a well know Pascal matrix mentioned already above. Hence,
we have for example, for all $n>j$
\[
\sum_{k=j}^{n}(-1)^{k}\binom{n}{k}\binom{k}{j}=0.
\]

\begin{remark}
Notice also that $\left[  H[n-j\right]  \allowbreak=\allowbreak\left(
1/(1-x),x\right)  $, hence naturally we have $\left[  H\left(  n-j\right)
\right]  ^{-1}\allowbreak=\allowbreak\left(  1-x,x\right)  $, as it follows
from the List of subgroups ...\ref{subgr}, 3. .
\end{remark}

2. As shown in \cite{Szabl14} we have%
\begin{equation}
\left[  \binom{n}{j}\frac{1}{n-j+1}\right]  ^{-1}=\left[  \binom{n}{j}%
B_{n-j}\right]  ,~\left[  \varepsilon(n-j)\binom{n}{j}\right]  ^{-1}=\left[
\binom{n}{j}E_{n-j}\right]  . \label{BiE}%
\end{equation}

Thus we have for all $n>j:$%
\begin{align*}
\sum_{k=j}^{n}\binom{n}{k}\binom{k}{j}\frac{B_{k-j}}{n-k+1}  &  =0,\\
\sum_{k=j}^{n}e(n-k)\binom{n}{k}\binom{k}{j}E_{k-j}  &  =0.
\end{align*}

Notice that performing the following matrix multiplications on the first of
the identities (\ref{BiE}):%
\[
\left[  \left\{  1/n!\right\}  \right]  \left[  \binom{n}{j}B_{n-j}\right]
\left[  \left\{  j!\right\}  \right]  ,~\left[  \left\{  1/n!\right\}
\right]  \left[  \binom{n}{j}\frac{1}{n-j+1}\right]  ^{-1}\left[  \left\{
j!\right\}  \right]  ,
\]
we end up with the rather unexpected identities:%
\begin{align}
\left[  1/(n-j)!\right]  ^{-1}  &  =\left[  (-1)^{n-j}/(n-j)!\right]
\label{sila}\\
\left[  1/(n-j+1)!\right]  ^{-1}  &  =\left[  B_{n-j}/(n-j)!\right]
,\label{silb}\\
\left[  \varepsilon(j-i)/(j-i)!\right]  ^{-1}  &  =\left[  E_{j-i}%
/(j-i)!\right]  . \label{silc}%
\end{align}

\begin{remark}
Let us notice that $\left[  1/(n-j)!\right]  $, $\left[  1/(n-j+1)!\right]
,\left[  \varepsilon(j-i)/(j-i)!\right]  $, $\left[  B_{n-j}/(n-j)!\right]  $,
$\left[  E_{j-i}/(j-i)!\right]  $ are Riordan matrices and $\left[
1/(n-j)!\right]  \allowbreak=\allowbreak\left(  \exp(x),x\right)  $, $\left[
1/(n-j+1)!\right]  \allowbreak=\allowbreak\left(  \exp(x\right)  -1,x)$,
$\left[  \varepsilon(j-i)/(j-i)!\right]  \allowbreak=\allowbreak\left(
\cosh(x),x\right)  $, \newline$\left[  B_{n-j}/(n-j)!\right]  \allowbreak
=\allowbreak\left(  x\exp(x)/(\exp(x)-1),x\right)  $, $\left[  E_{j-i}%
/(j-i)!\right]  \allowbreak=\allowbreak\left(  2\exp(x)/(\exp(x)+1),x\right)
.$ Besides we see that $\left[  \binom{n}{j}E_{n-j}\right]  ,$ $\left[
\varepsilon(n-j)\binom{n}{j}\right]  ,$ $\left[  \varepsilon
(j-i)/(j-i)!\right]  ~,\left[  E_{j-i}/(j-i)!\right]  $ are the examples of
the elements of the sub-algebra $\mathcal{SE}$.
\end{remark}

3. As shown also in \cite{Szabl14} we have further%
\begin{align*}
\left[  \varepsilon(n-j)\binom{n}{j}\frac{1}{n-j+1}\right]  ^{-1}  &  =\left[
\binom{n}{j}\sum_{k=0}^{n-j}\binom{n-j}{k}2^{k}B_{k}\right]  ,\\
\left[  \binom{2n}{2j}\right]  ^{-1}  &  =\left[  \binom{2n}{2j}%
E_{2(n-j)}\right]  ,\\
~~\left[  \binom{2n}{2j}\frac{1}{2(n-j)+1}\right]  ^{-1}  &  =\left[
\binom{2n}{2j}\sum_{k=0}^{2(n-j)}\binom{2n-2j}{k}2^{k}B_{k}\right]  .
\end{align*}

Hence, in particular we have $\forall s\geq1:$%
\begin{gather*}
\sum_{k=0}^{s}\binom{2s}{2k}E_{2k}=0,~\\
\sum_{k=0}^{2s}\binom{2s}{k}B_{k}/(2s-k+1)=0.
\end{gather*}
Notice that we have changed the order of summation to get the second identity
and use the following, elementary to prove, identity:%
\[
\sum_{m=\left\lfloor l/2\right\rfloor }^{s}\binom{2s-l}{2m-l}/(2s-2m+1)=\frac
{2^{2s-l}}{2s-l+1}.
\]

Again, performing right hand side and left-hand side multiplications by
matrices $\left[  \left\{  1/n!\right\}  \right]  $ and its inverse we get the
following also unexpected identities:%
\begin{align}
\left[  \varepsilon(j-i)/(j-i+1)!\right]  ^{-1}  &  =\left[  \sum_{k=0}%
^{j-i}2^{k}B_{j-i-k}/(k!(j-i-k)!)\right]  ,\label{sil2a}\\
\left[  1/(2j-2i)!\right]  ^{-1}  &  =\left[  E_{2j-2i}/(2j-2i)!\right]
,\label{sil2b}\\
\left[  1/(2j-2i+1)!\right]  ^{-1}  &  =\left[  \sum_{k=0}^{2j-2i}2^{k}%
B_{k}/(k!(2j-2i-k)!)\right]  . \label{sil2c}%
\end{align}

\begin{remark}
Again notice that $\left[  \varepsilon(n-j)/(n-j+1)!\right]  $, $\left[
1/(2n-2j)!\right]  ,\left[  1/(2n-2i+1)!\right]  $, $\left[  E_{2j-2i}%
/(2j-2i)!\right]  $ are Riordan matrices respectively: $\allowbreak\left(
\sinh(x)/x,x\right)  $, $\allowbreak(\cosh(\sqrt{x}),x)$, $\left(  \sinh
(\sqrt{x}\right)  /\sqrt{x},x)\allowbreak$, $\left(  \sqrt{x}/\sinh\left(
\sqrt{x}\right)  ,x\right)  $. \newline Consequently we deduce that $\left[
\sum_{k=0}^{j-i}2^{k}B_{j-i-k}/(k!(j-i-k)!)\right]  $, \newline$\left[
\sum_{k=0}^{2j-2i}2^{k}B_{k}/(k!(2j-2i-k)!)\right]  $ are also Riordan
matrices respectively: $\left(  x/\sinh\left(  x\right)  ,x\right)  $ and
$\left(  \sqrt{x}/\sinh\left(  x\right)  ,x\right)  $. \newline Again notice
that the lower-triangular infinite matrices $\left[  \varepsilon(n-j)\binom
{n}{j}\frac{1}{n-j+1}\right]  $ and $\left[  \varepsilon(j-i)/(j-i+1)!\right]
$ belong to the sub-algebra $\mathcal{SE}$.
\end{remark}

\subsection{Pochhammer symbol - rising factorials}

We will use the following notation. For $x\in\mathbb{C}$ let us denote%
\begin{equation}
(x)_{(n)}\allowbreak=\allowbreak x(x-1)\ldots(x-n+1). \label{down}%
\end{equation}
This polynomial in $x$ will be called falling factorial while the following
polynomial
\begin{equation}
(x)^{(n)}=x(x+1)\ldots(x+n-1), \label{up}%
\end{equation}
will be called rising factorial. The $n-$th rising factorial is also called
the Pochhammer symbol. In both cases, we set $1$ when $n\allowbreak
=\allowbreak0$.

It is also well-know that
\begin{equation}
(x)_{(n)}=(-1)^{n}(-x)^{(n)},\text{and }(x)^{(n)}=(-1)^{n}(-x)_{(n)}.
\label{Pochh}%
\end{equation}
Recall, that we have also the so-called binomial theorem stating that for all
complex $\left\vert x\right\vert <1$ we have%
\begin{equation}
(1-x)^{\alpha}\allowbreak=\allowbreak\sum_{j\geq0}(-x)^{j}(\alpha
)_{(j)}/j!=\sum_{j\geq0}x^{j}(-\alpha)^{(j)}/j!. \label{bin}%
\end{equation}

Notice that from this expansion, by the (so-called exponential) generating
function method we get the following identity which is true for all
$\alpha,\beta\in\mathbb{C}$:%
\begin{align}
\left(  \alpha+\beta\right)  ^{(n)}  &  =\sum_{j=0}^{n}\binom{n}{j}\left(
\alpha\right)  ^{(n-j)}\left(  \beta\right)  ^{\left(  j\right)
},\label{sum1}\\
\left(  \alpha+\beta\right)  _{\left(  n\right)  }  &  =\sum_{j=0}^{n}%
\binom{n}{j}\left(  \alpha\right)  _{(n-j)}\left(  \beta\right)  _{\left(
j\right)  }. \label{sum2}%
\end{align}

The following is our initial observation:

\begin{proposition}
\label{exp}i) $\forall\mathbb{C}\ni x\neq0:$%
\begin{equation}
\left[  (-1)^{j}\binom{n}{j}\frac{\left(  x\right)  ^{(n)}}{\left(  x\right)
^{\left(  j\right)  }}\right]  ^{-1}=\left[  (-1)^{j}\binom{n}{j}\frac{\left(
x\right)  ^{(n)}}{\left(  x\right)  ^{\left(  j\right)  }}\right]  .
\label{=odwr}%
\end{equation}

ii) $\forall\mathbb{C}\ni x\neq0:$%
\begin{equation}
\left[  \binom{n}{j}\frac{\left(  x\right)  ^{(n)}}{\left(  x\right)
^{\left(  j\right)  }}\right]  ^{-1}=\left[  (-1)^{n-j}\binom{n}{j}%
\frac{\left(  x\right)  ^{(n)}}{\left(  x\right)  ^{\left(  j\right)  }%
}\right]  . \label{=odwrii}%
\end{equation}

iii) For the reference sequence $c_{n}\allowbreak=\allowbreak n!$, we have%
\[
\left[  (-1)^{j}\binom{n}{j}\frac{\left(  x\right)  ^{(n)}}{\left(  x\right)
^{\left(  j\right)  }}\right]  \allowbreak=\allowbreak\left(  \left(
1-y\right)  ^{-x},\frac{-y}{1-y}\right)  .
\]
Thus, it belongs to a class $\mathcal{IP}(p,-1,1)$ (compare position (1) of
the List of subgroups ...\ref{subgr}.
\end{proposition}

\begin{proof}
To prove this identity, we have to show that $\forall n>j:$%
\begin{align*}
0  &  =\sum_{k=j}^{n}(-1)^{k}\binom{n}{k}\frac{\left(  x\right)  ^{(n)}%
}{\left(  x\right)  ^{\left(  k\right)  }}(-1)^{j}\binom{k}{j}\frac{\left(
x\right)  ^{(k)}}{\left(  x\right)  ^{\left(  j\right)  }}\\
&  =\binom{n}{j}\frac{\left(  x\right)  ^{(n)}}{\left(  x\right)  ^{\left(
j\right)  }}\sum_{k=j}^{n}(-1)^{k-j}\frac{(n-j)!}{(n-k)!(k-j)!}.
\end{align*}
But this is obvious in the face of the above calculations.

ii) We apply right-hand side multiplication of a lower triangular infinite
matrix by a diagonal matrix $\left\{  (-1)^{j}\right\}  $ as described in
Remark \ref{proste},4. getting on one hand $\left[  (-1)^{j}\binom{n}{j}%
\frac{\left(  x\right)  ^{(n)}}{\left(  x\right)  ^{\left(  j\right)  }%
}\right]  [\left\{  \left(  -1\right)  ^{j}\right\}  ]\allowbreak
=\allowbreak\left[  \binom{n}{j}\frac{\left(  x\right)  ^{(n)}}{\left(
x\right)  ^{\left(  j\right)  }}\right]  .$ On the other hand since
\begin{align*}
&  \left(  \left[  (-1)^{j}\binom{n}{j}\frac{\left(  x\right)  ^{(n)}}{\left(
x\right)  ^{\left(  j\right)  }}\right]  [\left\{  \left(  -1\right)
^{j}\right\}  ]\right)  ^{-1}\\
&  =[\left\{  \left(  -1\right)  ^{j}\right\}  ]^{-1}\left[  (-1)^{j}\binom
{n}{j}\frac{\left(  x\right)  ^{(n)}}{\left(  x\right)  ^{\left(  j\right)  }%
}\right] \\
&  =[\left\{  \left(  -1\right)  ^{n}\right\}  ]\left[  (-1)^{j}\binom{n}%
{j}\frac{\left(  x\right)  ^{(n)}}{\left(  x\right)  ^{\left(  j\right)  }%
}\right]  =\left[  (-1)^{n-j}\binom{n}{j}\frac{\left(  x\right)  ^{(n)}%
}{\left(  x\right)  ^{\left(  j\right)  }}\right]
\end{align*}

iii) Following (\ref{bin}), we get for the $j-$th column
\begin{align*}
g_{j}(x,y)  &  =\sum_{n\geq j}(-1)^{j}\binom{n}{j}\frac{\left(  x\right)
^{\left(  n\right)  }y^{n}}{\left(  x\right)  ^{\left(  j\right)  }%
n!}=(-1)^{j}\frac{y^{j}}{j!}\sum_{n\geq j}y^{n-j}\frac{\left(  x+j\right)
^{\left(  n-j\right)  }}{(n-j)!}\\
&  =(-1)^{j}\frac{y^{j}}{j!}(1-y)^{-x-j}=(1-y)^{-x}(-\frac{y}{1-y})^{j}/j!.
\end{align*}

\end{proof}

Now, taking $\alpha\allowbreak=\allowbreak x$ and $\beta\allowbreak
=\allowbreak-x$ in (\ref{sum1}) and then in (\ref{sum2}) and finally using the
standard trick with multiplication by the diagonal matrix $\left[  \left\{
n!\right\}  \right]  $ and its inverse (compare Remark \ref{proste},4.), we
arrive at the following identities%
\begin{align*}
\left[  \left(  x\right)  ^{\left(  n-j\right)  }/(n-j)!\right]  ^{-1} &
=\left[  \left(  -x\right)  ^{\left(  n-j\right)  }/(n-j)!\right]  ,\\
\left[  \left(  x\right)  _{\left(  n-j\right)  }/\left(  n-j\right)
!\right]  ^{-1} &  =\left[  \left(  -x\right)  _{\left(  n-j\right)
}/(n-j)!\right]  .
\end{align*}

Similarly, setting $x\allowbreak=\allowbreak1$ in (\ref{=odwr},i)), we get%
\[
\left[  (-1)^{j}(n-j)!\binom{n}{j}^{2}\right]  ^{-1}=\left[  (-1)^{j}%
(n-j)!\binom{n}{j}^{2}\right]  .
\]

Similarly, setting $x\allowbreak=\allowbreak1$ in (\ref{=odwrii}), we get%
\[
\left[  (n-j)!\binom{n}{j}^{2}\right]  ^{-1}=\left[  (-1)^{n-j}(n-j)!\binom
{n}{j}^{2}\right]  ,
\]

\begin{remark}
It is elementary to notice that $\left[  \left(  x\right)  ^{\left(
n-j\right)  }/(n-j)!\right]  $ is equal to the Riordan matrix of Appell
type$((1-y)^{-x},y).$ Further let us note, that $\left[  (-1)^{j}%
(n-j)!\binom{n}{j}^{2}\right]  $ is a Riordan matrix $(\frac{1}{1-y},\frac
{-y}{1-y}).$ Finally, combining the fact that $\left[  (-1)^{j}(n-j)!\binom
{n}{j}^{2}\right]  $ is a Riordan matrix $(\frac{1}{1-y},\frac{-y}{1-y}),$ the
fact that
\[
\left[  (n-j)!\binom{n}{j}^{2}\right]  \allowbreak=\allowbreak\left[
(-1)^{j}(n-j)!\binom{n}{j}^{2}\right]  [\left\{  (-1)^{j}\right\}  ],
\]
and the observations contained in the Remark \ref{diagon}, we deduce the
$\left[  (n-j)!\binom{n}{j}^{2}\right]  $ is the Riordan matrix of the Pascal
type equal to $(\frac{1}{1-y},\frac{y}{1-y}).$
\end{remark}

Recently, in \cite{Szabl24} several lower-triangular matrices involving rising
factorials of two variables have been defined. Let us present these matrices
and some relationships they are involved in. Let us remark that the matrices
presented below are not Riordan matrices. However, since they have rather
complicated structure, they can be used to obtain non-trivial relationships
involving Pochhammer symbols of two variables.

1.
\begin{align*}
E(a,b)  &  =\left[  \frac{\left(  a+b+n-1\right)  ^{\left(  j\right)  }}%
{j!}\frac{(b+j)^{(n-j)}}{(n-j)!}\right]  ^{-1}\\
\bar{E}(a,b)  &  =E^{-1}(a,b)=\left[  (-1)^{n-j}\frac{n!\left(  b+j\right)
^{\left(  n-j\right)  }\left(  a+b+2j-1\right)  }{(n-j)!\left(
a+b+j-1\right)  ^{\left(  n+1\right)  }}\right]  .
\end{align*}

2.%
\begin{align*}
E(a,b)\bar{E}(b,b)  &  =\left[  (-1)^{n-j}\frac{(b+j)^{(n-j)}(a-b)^{(n-j)}%
(a+b+n-1)^{(j)}(2b+2j-1)}{(n-j)!(2b+j-1)^{(n+1)}}\right]  ,\\
E(b,a)\bar{E}\left(  b,b\right)   &  =\left[  (-1)^{n}\right]  E(a,b)\bar
{E}(b,b)\left[  \left(  -1\right)  ^{j}\right] \\
&  =\left[  \frac{(b+j)^{(n-j)}(a-b)^{(n-j)}(a+b+n-1)^{(j)}(2b+2j-1)}%
{(n-j)!(2b+j-1)^{(n+1)}}\right]  ,\\
E(b,b)\bar{E}(a,b)  &  =\left(  E(a,b)\bar{E}(b,b)\right)  ^{-1}\\
&  =\left[  (-1)^{n-j}\frac{(b+j)^{(n-j)}(2b+n-1)^{(j)}(b-a)^{(n-j)}\left(
a+b+2j-1\right)  }{(n-j)!(a+b+j-1)^{(n+1)}}\right]  ,\\
E\left(  b,b\right)  \bar{E}(b,a)  &  =\left[  (-1)^{n}\right]  E(b,b)\bar
{E}(a,b)\left[  \left(  -1\right)  ^{j}\right] \\
&  =\left[  \frac{(b+j)^{(n-j)}(2b+n-1)^{(j)}(b-a)^{(n-j)}\left(
a+b+2j-1\right)  }{(n-j)!(a+b+j-1)^{(n+1)}}\right]  .
\end{align*}

3.%
\begin{gather*}
E\left(  a,a\right)  \bar{E}\left(  b,b\right)  =\\
\left[  \frac{\varepsilon\left(  n-j\right)  (2b+2j-1)\left(  2a+n-1\right)
^{\left(  j\right)  }\left(  a-b\right)  ^{\left(  \left(  n-j\right)
/2\right)  }\left(  b+j\right)  ^{\left(  \left(  n-j\right)  /2\right)
}\left(  a+\left(  n+j\right)  /2\right)  ^{\left(  \left(  n-j\right)
/2\right)  }}{\left(  \left(  n-j\right)  /2\right)  !\left(  2b+j-1\right)
^{\left(  n+1\right)  }}\right]  .
\end{gather*}

\subsection{Examples coming from $q-$series theory}

\subsubsection{Introduction and notation}

Let us introduce a few elementary notions of the so-called $q-$series theory.

$q$ is a parameter. Generally, we will assume that $-1<q<1,$ unless otherwise
stated. However, sometimes we will consider the case $q\allowbreak
=\allowbreak1$, not always directly, but as a left-hand side limit (
i.e.,$q\longrightarrow1^{-}$). We will point out these cases.

We will use traditional notations of the $q-$series theory i.e.,%
\[
\left[  0\right]  _{q}\allowbreak=\allowbreak0,~\left[  n\right]
_{q}\allowbreak=\allowbreak1+q+\ldots+q^{n-1}\allowbreak,\left[  n\right]
_{q}!\allowbreak=\allowbreak\prod_{j=1}^{n}\left[  j\right]  _{q},\text{with
}\left[  0\right]  _{q}!\allowbreak=1\text{,}%
\]%
\[%
\genfrac{[}{]}{0pt}{}{n}{k}%
_{q}=\left\{
\begin{array}
[c]{ccc}%
\frac{\left[  n\right]  _{q}!}{\left[  n-k\right]  _{q}!\left[  k\right]
_{q}!} & , & n\geq k\geq0\\
0 & , & \text{otherwise}%
\end{array}
\right.  \text{.}%
\]
$\binom{n}{k}$ will denote the ordinary, well known binomial coefficient. It
turns out that $%
\genfrac{[}{]}{0pt}{}{n}{k}%
_{q}$ are polynomials in $q$ (called Gauss polynomials).

It is useful to use the so-called $q-$Pochhammer symbol for $n\geq1:$%
\begin{equation}
\left(  a|q\right)  _{n}=\prod_{j=0}^{n-1}\left(  1-aq^{j}\right)  ,~~\left(
a_{1},a_{2},\ldots,a_{k}|q\right)  _{n}\allowbreak=\allowbreak\prod_{j=1}%
^{k}\left(  a_{j}|q\right)  _{n}\text{,} \label{qP}%
\end{equation}
with $\left(  a|q\right)  _{0}\allowbreak=\allowbreak1$.

Although the formula below was known much earlier we cite \cite{Pol71} because
of its nice proof. Namely, we have%
\begin{equation}
\left(  a|q\right)  _{n}=\sum_{j=0}^{n}(-a)^{j}q^{\binom{j}{2}}%
\genfrac{[}{]}{0pt}{}{n}{j}%
_{q}. \label{fbin}%
\end{equation}
This formula is often referred to as a finite $q-$binomial formula, that will
be generalized just below.

Often $\left(  a|q\right)  _{n}$ as well as $\left(  a_{1},a_{2},\ldots
,a_{k}|q\right)  _{n}$ will be abbreviated to $\left(  a\right)  _{n}$ (not to
be confused with the falling factorial defined above) and \newline$\left(
a_{1},a_{2},\ldots,a_{k}\right)  _{n}$, if it will not cause misunderstanding.

We will also use the following symbol $\left\lfloor n\right\rfloor $ to denote
the largest integer not exceeding $n$.

It is worth to mention the following two formulae, that are well known.
Namely, the following formulae are true for $\left\vert t\right\vert <1$,
$\left\vert q\right\vert <1$ (derived already by Euler, see \cite{Andrews1999}
Corollary 10.2.2)
\begin{align}
\frac{1}{(t)_{\infty}}\allowbreak &  =\allowbreak\sum_{k\geq0}\frac{t^{k}%
}{(q)_{k}}\text{,}\label{binT}\\
(t)_{\infty}\allowbreak &  =\allowbreak\sum_{k\geq0}(-1)^{k}q^{\binom{k}{2}%
}\frac{t^{k}}{(q)_{k}}\text{.} \label{obinT}%
\end{align}

It is easy to notice that $\left(  q\right)  _{n}=\left(  1-q\right)
^{n}\left[  n\right]  _{q}!$ and that%
\begin{equation}%
\genfrac{[}{]}{0pt}{}{n}{k}%
_{q}\allowbreak=\allowbreak\allowbreak\left\{
\begin{array}
[c]{ccc}%
\frac{\left(  q\right)  _{n}}{\left(  q\right)  _{n-k}\left(  q\right)  _{k}}
& , & n\geq k\geq0\\
0 & , & \text{otherwise}%
\end{array}
\right.  \text{.} \label{dqbin}%
\end{equation}
\newline The above-mentioned formula is just an example where direct setting
$q\allowbreak=\allowbreak1$ is senseless however, the passage to the limit
$q\longrightarrow1^{-}$ makes sense.

Notice that, in particular,
\begin{equation}
\left[  n\right]  _{1}\allowbreak=\allowbreak n,~\left[  n\right]
_{1}!\allowbreak=\allowbreak n!,~%
\genfrac{[}{]}{0pt}{}{n}{k}%
_{1}\allowbreak=\allowbreak\binom{n}{k},~(a)_{1}\allowbreak=\allowbreak
1-a,~\left(  a|1\right)  _{n}\allowbreak=\allowbreak\left(  1-a\right)  ^{n}
\label{q1}%
\end{equation}
and
\begin{equation}
\left[  n\right]  _{0}\allowbreak=\allowbreak\left\{
\begin{array}
[c]{ccc}%
1 & \text{if} & n\geq1\\
0 & \text{if} & n=0
\end{array}
\right.  ,~\left[  n\right]  _{0}!\allowbreak=\allowbreak1,~%
\genfrac{[}{]}{0pt}{}{n}{k}%
_{0}\allowbreak=\allowbreak1,~\left(  a|0\right)  _{n}\allowbreak
=\allowbreak\left\{
\begin{array}
[c]{ccc}%
1 & \text{if} & n=0\\
1-a & \text{if} & n\geq1
\end{array}
\right.  . \label{q2}%
\end{equation}

\subsubsection{Examples concerning important in $q-$series theory families of
polynomials}

To proceed further, let us prove some auxiliary results.

\begin{proposition}
\label{aux}1. $\forall x,y,q\in\mathbb{C},\left\vert q\right\vert ,\left\vert
y\right\vert <1:$%
\[
\sum_{j=0}^{\infty}\frac{y^{j}}{\left(  q\right)  _{j}}\left(  x\right)
_{j}=\frac{\left(  yx\right)  _{\infty}}{\left(  y\right)  _{\infty}},
\]

2. $\forall x,q\in\mathbb{C},x\neq0,\left\vert q\right\vert <1,n\geq1$%
\begin{equation}
\sum_{j=0}^{n}%
\genfrac{[}{]}{0pt}{}{n}{j}%
_{q}\left(  x\right)  _{n-j}x^{j}\left(  1/x\right)  _{j}=0. \label{qnaw}%
\end{equation}

\end{proposition}

\begin{proof}
1. Using (\ref{fbin}), we get
\begin{align*}
\sum_{j=0}^{\infty}\frac{y^{j}}{\left(  q\right)  _{j}}\left(  x\right)  _{j}
&  =\sum_{j=0}^{\infty}\frac{y^{j}}{\left(  q\right)  _{j}}\sum_{k=0}^{j}%
\genfrac{[}{]}{0pt}{}{j}{k}%
_{q}q^{\binom{k}{2}}(-x)^{k}\\
&  =\sum_{k=0}^{\infty}\frac{(yx)^{k}}{\left(  q\right)  _{k}}(-1)^{k}%
q^{\binom{k}{2}}\sum_{j=k}^{\infty}\frac{y^{j-k}}{\left(  q\right)  _{j-k}%
}=\frac{\left(  yx\right)  _{\infty}}{\left(  y\right)  _{\infty}},
\end{align*}
by (\ref{binT}) and (\ref{obinT}).

2. Now notice that using assertion 1. with $y$ substituted by $xy$ we get
\[
\frac{\left(  y\right)  _{\infty}}{\left(  yx\right)  _{\infty}}=\frac{\left(
yx/x\right)  _{\infty}}{\left(  yx\right)  _{\infty}}=\sum_{j=0}^{\infty}%
\frac{y^{j}}{\left(  q\right)  _{j}}x^{j}\left(  1/x\right)  _{j}.
\]

Let us denote $d_{n}(x|q)\allowbreak=\allowbreak\sum_{j=0}^{n}%
\genfrac{[}{]}{0pt}{}{n}{j}%
_{q}\left(  x\right)  _{n-j}x^{j}\left(  1/x\right)  _{j}$ and let us find its
generating function. We have%
\begin{gather*}
\sum_{n=0}^{\infty}\frac{y^{n}}{\left(  q\right)  _{n}}d_{n}(x|q)=\sum
_{n=0}^{\infty}y^{n}\sum_{j=0}^{n}\frac{\left(  x\right)  _{n-j}}{\left(
q\right)  _{n-j}}\frac{x^{j}\left(  1/x\right)  _{j}}{\left(  q\right)  _{j}%
}=\\
\sum_{j=0}^{\infty}\frac{y^{j}x^{j}\left(  1/x\right)  _{j}}{\left(  q\right)
_{j}}\sum_{n=j}^{\infty}\frac{y^{n-j}}{\left(  q\right)  _{n-j}}\left(
x\right)  _{n-j}=\frac{\left(  y\right)  _{\infty}}{\left(  yx\right)
_{\infty}}\frac{\left(  yx\right)  _{\infty}}{\left(  y\right)  _{\infty}}=1.
\end{gather*}
Hence $d_{n}(x|q)\allowbreak=\allowbreak0$ for $n\geq1.$
\end{proof}

We also have the following almost elementary observation:

Following (\ref{fbin}) for $n>0$ and $a\allowbreak=\allowbreak1$ we have he
following identity, true for $n\geq1:$%
\[
\sum_{j=0}^{n}%
\genfrac{[}{]}{0pt}{}{n}{j}%
_{q}(-1)^{j}q^{\binom{j}{2}}=0.
\]

As a corollary we have the following relationship:

\begin{corollary}
\label{vqbin}1. $\forall x,q\in\mathbb{C}$,$\left\vert q\right\vert \neq1:$%
\[
\left[  x^{n-j}/(q)_{n-j}\right]  ^{-1}\allowbreak=\allowbreak\left[
(-1)^{n-j}x^{n-j}q^{\binom{n-j}{2}}/(q)_{n-j}\right]  .
\]
\qquad

In particular, taking $x\allowbreak=\allowbreak1-q$, we have%
\[
\left[  1/\left[  n-j\right]  _{q}!\right]  ^{-1}=\left[  (-1)^{n-j}%
q^{\binom{n-j+1}{2}}/\left[  n-j\right]  _{q}!\right]  .
\]
Further multiplying from the left-hand side by the matrix $\left[  \left\{
\left(  q\right)  _{n}\right\}  \right]  $ and from the right-hand side by the
matrix $\left[  \left\{  \left(  q\right)  _{j}^{-1}\right\}  \right]  $, we
get the following relationship:%
\begin{equation}
\left[  x^{n-j}%
\genfrac{[}{]}{0pt}{}{n}{j}%
_{q}\right]  ^{-1}=\left[  \left(  -x\right)  ^{n-j}q^{\binom{n-j}{2}}%
\genfrac{[}{]}{0pt}{}{n}{j}%
_{q}\right]  . \label{qbin}%
\end{equation}

2. Following (\ref{qnaw}), we get for all $x,y\in\mathbb{C}$:%
\[
\left[  y^{n-j}\left(  x\right)  _{n-j}/\left(  q\right)  _{n-j}\right]
^{-1}=\left[  y^{n-j}x^{n-j}\left(  1/x\right)  _{n-j}/\left(  q\right)
_{n-j}\right]
\]
and after applying similar trick with diagonal matrix multiplication we get%
\[
\left[  y^{n-j}%
\genfrac{[}{]}{0pt}{}{n}{j}%
_{q}\left(  x\right)  _{n-j}\right]  ^{-1}=\left[  y^{n-j}x^{n-j}%
\genfrac{[}{]}{0pt}{}{n}{j}%
_{q}\left(  1/x\right)  _{n-j}\right]
\]

\end{corollary}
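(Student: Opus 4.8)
The plan is to reduce both assertions to the elementary inversion criterion for Toeplitz-type lower triangular matrices recorded in Remark \ref{proste}.5: a matrix $\left[d_{n-j}\right]$ with $d_{0}=1$ is inverted by $\left[e_{n-j}\right]$, where, writing $D(t)=\sum_{m\geq 0}d_{m}t^{m}$ and $E(t)=\sum_{m\geq 0}e_{m}t^{m}$, the two matrices are mutual inverses precisely when $D(t)E(t)=1$ as formal power series (indeed the $(n,j)$ entry of the product is $\sum_{i=0}^{n-j}d_{n-j-i}e_{i}=[[t^{n-j}]]D(t)E(t)$, and having $1$ on the diagonal places both matrices in the group $\mathcal{I}$, so a one-sided inverse is two-sided). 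Thus in each case it suffices to identify the two relevant generating functions and verify they are reciprocal. The condition $|q|\neq 1$ is used only to guarantee $(q)_{m}\neq 0$ for all $m$.

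For assertion 1, set $d_{m}=x^{m}/(q)_{m}$, so $d_{0}=1$ and, by $(\ref{binT})$, $D(t)=\sum_{m\geq 0}(xt)^{m}/(q)_{m}=1/(xt)_{\infty}$. By $(\ref{obinT})$, $(xt)_{\infty}=\sum_{m\geq 0}(-1)^{m}q^{\binom{m}{2}}(xt)^{m}/(q)_{m}$, so the series with coefficients $e_{m}=(-1)^{m}q^{\binom{m}{2}}x^{m}/(q)_{m}$ satisfies $D(t)E(t)=1$; this is exactly the first displayed identity. The specialization $x=1-q$ is then obtained by substituting into that identity and using $(q)_{m}=(1-q)^{m}[m]_{q}!$ to rewrite both sides, the only care needed being the straightforward simplification of the power of $q$. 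Finally, $(\ref{qbin})$ follows by conjugating the identity just proved with the diagonal matrix $\left[\left\{(q)_{n}\right\}\right]$: by $(\ref{mnoz})$ one has $\left[\left\{(q)_{n}\right\}\right]\left[a_{n,j}\right]\left[\left\{(q)_{j}\right\}\right]^{-1}=\left[(q)_{n}a_{n,j}/(q)_{j}\right]$, and $(q)_{n}/((q)_{n-j}(q)_{j})=\QATOPD[ ] {n}{j}_{q}$ by $(\ref{dqbin})$; since conjugation by a fixed invertible matrix turns a pair of mutual inverses into a pair of mutual inverses, the transformed identity is again an inversion formula, which is $(\ref{qbin})$.

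For assertion 2, set $d_{m}=y^{m}(x)_{m}/(q)_{m}$ (again $d_{0}=1$). Applying the first assertion of the Proposition preceding this Corollary with its variable $y$ replaced by $yt$ gives $D(t)=\sum_{m\geq 0}(x)_{m}(yt)^{m}/(q)_{m}=(xyt)_{\infty}/(yt)_{\infty}$. Applying the same assertion with $x$ replaced by $1/x$ and $y$ replaced by $xyt$ shows that the series with coefficients $e_{m}=y^{m}x^{m}(1/x)_{m}/(q)_{m}$ has sum $E(t)=(yt)_{\infty}/(xyt)_{\infty}$, whence $D(t)E(t)=1$. This is the first displayed identity of part 2, and the second follows by the same conjugation by $\left[\left\{(q)_{n}\right\}\right]$ as above, which replaces $(x)_{n-j}/(q)_{n-j}$ by $\QATOPD[ ] {n}{j}_{q}(x)_{n-j}$ and likewise on the inverse side.

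I do not expect a genuine obstacle: everything rests on recognizing $D$ and $E$ as the $q$-exponential-type series $(\ref{binT})$, $(\ref{obinT})$ and on the first assertion of the preceding Proposition. The only points demanding attention are the legitimacy of the substitutions $y\mapsto yt$ and $(x,y)\mapsto(1/x,xyt)$ in that Proposition (so that $x\neq 0$ is respected and the identities are used as formal power series in $t$ rather than under their original analytic restrictions) and the bookkeeping in the diagonal conjugation step.
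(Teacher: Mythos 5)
Your proposal is correct and follows essentially the same route as the paper: reduce each matrix identity to the reciprocality of the corresponding generating functions via (\ref{binT}), (\ref{obinT}) and the preceding Proposition (whose part 2 is exactly the needed convolution identity (\ref{qnaw})), then pass to the $q$-binomial form by conjugation with the diagonal matrix $\left[\left\{ (q)_{n}\right\}\right]$ using Remark \ref{proste}.4. One caveat on the specialization $x=1-q$: your substitution yields the exponent $q^{\binom{n-j}{2}}$, not the $q^{\binom{n-j+1}{2}}$ printed in the statement, and a check of the $(1,0)$ entry (which must equal $-1$, not $-q$) confirms that your version is the correct one and the printed display contains a typo, so this is not a gap in your argument.
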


\begin{proof}
We start with the assertion of Proposition \ref{qbin} and multiply both sides
of this identity from the left-hand side by $\left[  \left\{  \left(
q\right)  _{n}^{-1}\right\}  \right]  $ and from the right-hand side by
$\left[  \left\{  \left(  q\right)  _{j}\right\}  \right]  $ and then apply
assertion $4$. of Remark \ref{proste}. On the way we utilize the definition of
$%
\genfrac{[}{]}{0pt}{}{n}{j}%
_{q}$ given by (\ref{dqbin}).
\end{proof}

\begin{remark}
Notice that when passing with $q$ to $1$ in the last assertion of the
Corollary \ref{vqbin} we get (\ref{sila}).
\end{remark}

\begin{remark}
Notice also that some of the relationships mentioned in the Corollary
\ref{vqbin} refer to Riordan matrices of the Appell type. Namely, we have
$\left[  x^{n-j}/(q)_{n-j}\right]  \allowbreak=\allowbreak\left(  \left(
tx\right)  _{\infty}^{-1},t\right)  $ and obviously $\allowbreak\left[
(-1)^{n-j}x^{n-j}q^{\binom{n-j}{2}}/(q)_{n-j}\right]  \allowbreak
=\allowbreak\left(  \left(  tx\right)  _{\infty},t\right)  $. \newline$\left[
y^{n-j}\left(  x\right)  _{n-j}/\left(  q\right)  _{n-j}\right]
\allowbreak=\allowbreak\left(  \left(  txy\right)  _{\infty}/\left(
ty\right)  _{\infty},t\right)  $ as it follows from Proposition \ref{aux}, 1.
Somewhat less obvious is the relationship
\[
\left[  y^{n-j}x^{n-j}\left(  1/x\right)  _{n-j}/\left(  q\right)
_{n-j}\right]  \allowbreak=\allowbreak\left(  \left(  ty\right)  _{\infty
}/\left(  tyx\right)  _{\infty},t\right)  .
\]

\end{remark}

Recall, e.g., following \cite{Szab2020}, that the following polynomials
\[
R_{n}(x|q)\allowbreak=\allowbreak\sum_{j=0}^{n}%
\genfrac{[}{]}{0pt}{}{n}{j}%
_{q}x^{j},
\]
are called Rogers-Szeg\"{o} polynomials and they play an important r\^{o}le in
the $q-$series theory. Following, e.g., \cite{Szab2020}, we know that the
generating function of the polynomials $\left\{  R_{s}\right\}  $ are given by
the following formula:%
\[
\sum_{s\geq0}\frac{t^{s}}{\left(  q\right)  _{s}}R_{s}(x|q)=\frac{1}{\left(
t\right)  _{\infty}\left(  tx\right)  _{\infty}}.
\]
Hence, by simple operation of series multiplication and making use of the
formula (\ref{obinT}) we deduce that the following family of polynomials:%
\[
\hat{R}_{n}(x|q)=(-1)^{n}\sum_{s=0}^{n}%
\genfrac{[}{]}{0pt}{}{n}{s}%
_{q}x^{s}q^{\binom{s}{2}+\binom{n-s}{2}},
\]
defined for $n\geq0$ satisfy the following identity:%
\[
\sum_{j=0}^{n}%
\genfrac{[}{]}{0pt}{}{n}{j}%
_{q}R_{j}(x|q)\hat{R}_{n-j}(x|q)=\delta_{n,0}.
\]
Consequently, we have
\[
\left[  R_{n-j}(x|q)/\left(  q\right)  _{n-j}\right]  ^{-1}=\left[  \hat
{R}_{n-j}(x|q)/\left(  q\right)  _{n-j}\right]  .
\]

Following \cite{Szab2020}, formulae (3.16), (4.10), and (5.15), and the
proceeding each of these formulae definitions, we have%
\begin{align*}
\left[  h_{n-j}(x|q)/\left[  n-j\right]  _{q}!\right]  ^{-1}  &  =\left[
b_{n-j}(x|q)/\left[  n-j\right]  _{q}!\right]  ,~\\
\left[  h_{n-j}(x|a,q)/\left[  n-j\right]  _{q}!\right]  ^{-1}  &  =\left[
\hat{h}_{n-j}(x|a,q)/\left[  n-j\right]  _{q}!\right]  ,\\
\left[  Q_{n-j}(x|a,b,q)/\left[  n-j\right]  _{q}!\right]  ^{-1}  &  =\left[
\hat{Q}_{n-j}(x|a,b,q)/\left[  n-j\right]  _{q}!\right]  ,.
\end{align*}

where $\left\{  h_{n}(x|q\right\}  $, $\left\{  h_{n}(x|a,q)\right\}  $,
$\left\{  Q_{n}(x|a,b,q)\right\}  $, are respectively the so-called
$q-$Hermite, big $q-$Hermite, Al-Salam--Chihara polynomials. They constitute a
part of the so-called Askey-Wilson scheme of orthogonal polynomials defined by
their three-term recurrences given by the formulae respectively (3.1), (4.1),
(5.1) in \cite{Szab2020}. The families of polynomials $\left\{  b_{n}%
(x|q)\right\}  $, $\left\{  \hat{h}_{n}(x|a,q)\right\}  $, $\left\{  \hat
{Q}_{n}(x|a,b,q)\right\}  $ are defined by their three-term recurrences given
by the formulae respectively (3.14), (4.9) and unnumbered formula proceeding
(5.15). Although these families of polynomials were defined for real $x,a,b,q$
all from the segment $[-1,1]$, we can extend their ranges to all complex
numbers since they are polynomials.

\begin{remark}
Let us denote the following infinite product%
\[
\varphi_{h}(x|a)=1/\prod_{j=0}^{\infty}v(t|aq^{j}),
\]
defined for all $\left\vert t\right\vert ,\left\vert a\right\vert ,\left\vert
q\right\vert <1.$ where $v\left(  x|a\right)  \allowbreak=\allowbreak
1-2ax+a^{2}.$ It is known (see e.g. \cite{KLS}, or \cite{Szab22}, (3.6)) that
GF of the sequence $\left\{  h_{n}(x|q\right\}  $ is equal to $\varphi
_{h}(x|t).$ That is we have%
\[
\sum_{n\geq0}\frac{t^{n}}{\left(  q\right)  _{n}}h_{n}(x|q)=\varphi_{h}(x|t),
\]
defined for all $\left\vert t\right\vert ,\left\vert x\right\vert ,\left\vert
q\right\vert <1.$Similarly, following either \cite{KLS}, or \cite{Szab22}(4.7)
we see that the GF of the family $\left\{  h_{n}(x|a,q)\right\}  $ is $\left(
at\right)  _{\infty}\allowbreak\varphi_{h}(x|t).$ Finally following the same
references we observe that the GF of the family $\left\{  Q_{n}%
(x|a,b,q)\right\}  $ is $\left(  at,bt\right)  _{\infty}\allowbreak\varphi
_{h}(x|t).$ Hence, we observe that $\left[  h_{n-j}(x|q)/\left[  n-j\right]
_{q}!\right]  \allowbreak=\allowbreak\left[  h_{n-j}(x|q)/\left(  q\right)
_{n-j}\right]  \left[  \left\{  (1-q)^{j}\right\}  \right]  $ and consequently
that is a Riordan matrix having the form of a product $\left(  \varphi
_{h}(x|t),t\right)  \allowbreak\left(  1,(1-q)t\right)  .$ Similarly $\left[
h_{n-j}(x|a,q)/\left[  n-j\right]  _{q}!\right]  $ is a Riordan matrix of a
form $(\left(  at\right)  _{\infty}\allowbreak\varphi_{h}(x|t),t))\allowbreak
\left(  1,\left(  1-q\right)  t\right)  $ and $\left[  Q_{n-j}%
(x|a,b,q)/\left[  n-j\right]  _{q}!\right]  $ is also a Riordan matrix of a
form $\left(  \left(  at,bt\right)  _{\infty}\allowbreak\varphi_{h}%
(x|t),t\right)  \allowbreak\left(  1,\left(  1-q\right)  t\right)  .$ All
these considerations are true for $\left\vert t\right\vert ,\allowbreak
\left\vert x\right\vert ,\allowbreak\left\vert a\right\vert ,\allowbreak
\left\vert b\right\vert ,\allowbreak\left\vert q\right\vert <1.$
\end{remark}

\section{Glossary\label{GL}}

1. $\left[  a_{n,j}\right]  _{n,j\geq0,}$ with $a_{nj}\allowbreak
=\allowbreak0$ for all $j>n\geq0$, lower-triangular infinite matrix with
entries belonging to $\mathbb{C}$,

2. $\mathcal{S}$ -the algebra of lower- triangular matrices .

3.$\mathcal{D}$ -the sub-algebra of all diagonal matrices.

4. $\left[  \left\{  \beta_{n}\right\}  \right]  $ -diagonal matrix with
$\beta_{n}$ as its ($n+1)\times(n+1)$ entry.

5. $\mathcal{SE}$ -sub-ring of lower-triangular matrices $\left[
a_{n,j}\right]  $ with $a_{n,j}\allowbreak=\allowbreak0$ whenever $n-j$ is an
odd number.

6. $\mathcal{L}$ -the group of lower-triangular matrices.

7. $\mathcal{R}$ -the sub-group of Riordan matrices.

8. $\mathcal{IP}(p,\beta,\alpha)$, $\mathcal{P}(p,\alpha)$, $\mathcal{L(}%
p,a)$, $\mathcal{A}$, $\mathcal{C}$ -various Riordan subgroups defined in the
List of subgroups ...\ref{subgr} ,

9. $B_{n}(x)$, $E_{n}(x)$ -respectively Bernoulli and Euler polynomials
defined by (\ref{Bn}) and (\ref{En}).

10. $L_{n}(x)$, $He_{n}(x)$ -Laguerre and Hermite (more precisely the
so-called probabilistic Hermite polynomials)

11. $(x)_{(n)\text{ }}$, $\left(  x\right)  ^{\left(  n\right)  }$
respectively falling and rising factorials.

12. $\left(  x|q\right)  _{n}$ the so-called $q-$Pochhammer symbol (defined by
(\ref{qP}), often also denoted by $\left(  x\right)  _{n}$ when $q$ is well defined.

13. GF-generating functions.

14. RGF -reference generating function.

15 FPS -formal power series.

16. GBT($\alpha$) -generalized binomial transformation defined in Remark
\ref{simple}.

\part{Declarations}

\begin{itemize}
\item Availability of data and material: Not applicable,

\item Competing interests: Not applicable,

\item Funding: Not applicable,

\item Authors' contributions: I'm single author, so my contribution is 100\%,

\item Acknowledgements: Not applicable
\end{itemize}


\begin{thebibliography}{99}                                                                                               %


\bibitem {Andrews1999}Andrews, George E.; Askey, Richard; Roy, Ranjan. Special
functions. Encyclopedia of Mathematics and its Applications, 71.
\emph{Cambridge University Press,} Cambridge, 1999. ISBN: 0-521-62321-9;
0-521-78988-5 MR1688958 (2000g:33001)

\bibitem {Barb14}Barbero, J.F., Salas, J., Villase\~{n}or, E.J.S. Bivariate
generating functions for a class of linear recurrences: general structure,
\emph{J. Combin. Theory Ser. A} \textbf{125} (2014) 146--165, MR3207469

\bibitem {Barry16}Paul Barry, Riordan arrays. A primer, \emph{Logic
Press,}\textbf{2016 (} 290 pages).

\bibitem {Bern09}Bernstein Dennis S, Matrix Mathematics: Theory, Facts, and
Formulas (Second Edition). $PrincetonUniversityPress$ (2009)

\bibitem {cand12}Jean-Louis, Candice; Nkwanta, Asamoah. Some algebraic
structure of the Riordan group. \emph{Linear Algebra Appl}. \textbf{438}
(2013), no. 5, 2018--2035. MR3005272

\bibitem {Cheo22}Cheon, Gi-Sang; Cohen, Marshall M.; Pantelidis, Nikolaos.
Decompositions and eigenvectors of Riordan matrices. \emph{Linear Algebra
Appl.} \textbf{642} (2022), 118--138. MR4387299

\bibitem {Cheo22(2)}Cheon, Gi-Sang; Curtis, Bryan; Shader, Bryan,
Riordan-Krylov matrices over an algebra. \emph{Linear Algebra Appl.
}\textbf{636 }(2022), 93--114. MR4346721

\bibitem {GoodLav15}Goodenough, Silvia; Lavault, Christian, Overview of the
Heisenberg-Weyl algebra and subsets of Riordan subgroups., \emph{Electron. J.
Combin. }\textbf{22 }(2015), no.4, Paper 4.16, 39 pp., MR3425607

\bibitem {EJ19}Encinas, A.M., Jim\'{e}nez, M.J. Triangular sequences,
combinatorial recurrences and linear difference equations, \emph{Linear
Algebra Appl. }\textbf{576} (2019), 301--323, MR3958150

\bibitem {EJ16}Jim\'{e}nez, M.J., Encinas, A.M. Combinatorial recurrences and
linear difference equations, \emph{Electron. Notes Discrete Math.} \textbf{54}
(2016), 313--318, ENDM,

\bibitem {KLS}Koekoek, Roelof; Lesky, Peter A.; Swarttouw, Ren\'{e} F.
Hypergeometric orthogonal polynomials and their \$q\$-analogues. With a
foreword by Tom H. Koornwinder. \emph{Springer Monographs in Mathematics.
Springer-Verlag}, Berlin, 2010. xx+578 pp. ISBN: 978-3-642-05013-8 MR2656096 (2011e:33029)

\bibitem {Rom84}Roman, Steven. The umbral calculus. Pure and Applied
Mathematics, 111. \emph{Academic Press, Inc}. [Harcourt Brace Jovanovich,
Publishers], New York, 1984. 193 pp. ISBN: 0-12-594380-6 MR0741185

\bibitem {Rog78}Rogers, D. G. Pascal triangles, Catalan numbers and renewal
arrays. \emph{Discrete Math.} \textbf{22} (1978), no. 3, 301--310. MR0522725

\bibitem {Pol71}P\'{o}lya, G.; Alexanderson, G. L. Gaussian binomial
coefficients. \emph{Elem. Math.} \textbf{26} (1971), 102--109. MR0299490

\bibitem {Shap22}Shapiro, Louis; Sprugnoli, Renzo; Barry, Paul; Cheon,
Gi-Sang; He, Tian-Xiao; Merlini, Donatella; Wang, Weiping, The Riordan group
and applications. With a foreword by Richard Stanley, \emph{Springer Monogr.
Math. }Springer, Cham, [2022], \copyright 2022. xxii+362 pp.

\bibitem {Szab2020}Szab\l owski, Pawe\l \ J. On the families of polynomials
forming a part of the Askey-Wilson scheme and their probabilistic
applications. \emph{Infin. Dimens. Anal. Quantum Probab. Relat. Top.}
\textbf{25} (2022), no. 1, Paper No. 2230001, 57 pp. MR4408180

\bibitem {Szab22}Szab\l owski, Pawe\l \ J. On the families of polynomials
forming a part of the Askey-Wilson scheme and their probabilistic
applications. \emph{Infin. Dimens. Anal. Quantum Probab. Relat. Top}%
.\textbf{25}(2022), no.1, Paper No. 2230001, 57 pp.

\bibitem {Szabl14}Szab\l owski, Pawe\l \ J. A few remarks on Euler and
Bernoulli polynomials and their connections with binomial coefficients and
modified Pascal matrices. \emph{Math. \AE terna }\textbf{4} (2014), no. 1-2,
83--88. MR3176129

\bibitem {Szabl24}Pawe\l \ J. Szab\l owski, A few finite and infinite
identities involving Pochhammer and $q$- Pochhammer symbols obtained via
analytical methods, submitted, http://arxiv.org/abs/2411.00647

\bibitem {Shap91}Shapiro, Louis W.; Getu, Seyoum; Woan, Wen Jin; Woodson, Leon
C. The Riordan group. Combinatorics and theoretical computer science
(Washington, DC, 1989). \emph{Discrete Appl. Math.} \textbf{34} (1991), no.
1-3, 229--239. MR1137996

\bibitem {Zhang97}Zhang, Zhizheng. The linear algebra of the generalized
Pascal matrix. \emph{Linear Algebra Appl.} 250 (1997), 51--60. MR1420570
\end{thebibliography}
\end{document}